\theoremstyle{plain}
\newtheorem{theorem}{Theorem}
\newtheorem{corollary}[theorem]{Corollary}
\newtheorem{proposition}[theorem]{Proposition}
\newtheorem*{theorem*}{Theorem}
\newtheorem*{conjecture*}{Conjecture}
\theoremstyle{definition}
\newtheorem{remark}[theorem]{Remark}
\newcommand{\breakingcomma}{%
  \begingroup\lccode`~=`,
  \lowercase{\endgroup\expandafter\def\expandafter~\expandafter{~\penalty0 }}}
\newcolumntype{L}{>{$}l<{$}}
\newcommand{\CC}{{\mathbb{C}}}
\newcommand{\QQ}{{\mathbb{Q}}}
\newcommand{\ZZ}{{\mathbb{Z}}}
\newcommand{\SL}{{\mathrm{SL}}}
\newcommand{\J}{{{\langle J \rangle}}}
\newcommand{\Jac}{\mathrm{Jac}}
\newcommand{\Fix}{\mathrm{Fix}}
\newcommand{\id}{\mathrm{id}}
\newcommand{\bx}{{\bf x}}
\newcommand{\age}{\mathrm{age}}
\def\A{{\mathcal A}}
\def\C{{\mathcal C}}
\DeclareMathOperator{\sgn}{sgn}
\newcommand{\ccHH}{{\mathsf{HH}}}
\newcommand\sig{\sigma}
\begin{document}
\title{Mirror map for Fermat polynomial with non--abelian group of symmetries}
\date{\today}
\author{Alexey Basalaev}
\address{A. Basalaev:\newline Faculty of Mathematics, National Research University Higher School of Economics, Usacheva str., 6, 119048 Moscow, Russian Federation, and \newline
Skolkovo Institute of Science and Technology, Nobelya str., 3, 121205 Moscow, Russian Federation}
\email{a.basalaev@skoltech.ru}
\author{Andrei Ionov}
\address{A. Ionov:\newline Faculty of Mathematics, National Research University Higher School of Economics, Usacheva str., 6, 119048 Moscow, Russian Federation, and \newline
Department of Mathematics, Massachusetts Institute of Technology, 77 Massachusetts
Ave., Cambridge, MA 02139, United States}
\email{aionov@mit.edu}

\begin{abstract}

We study Landau-Ginzburg orbifolds $(f,G)$ with $f=x_1^n+\ldots+x_N^n$ and $G=S\ltimes G^d$, where $S\subseteq S_N$ and $G^d$ is either the maximal group of scalar symmetries of $f$ or the intersection of the maximal diagonal symmetries of $f$ with $\SL_N(\CC)$. We construct a mirror map between the corresponding phase spaces and prove that it is an isomorphism restricted to a certain subspace of the phase space when $n=N$ is a prime number. When $S$ satisfies the condition PC of Ebeling-Gusein-Zade this subspace coincides with the full space. 
We also show that two phase spaces are isomorphic for $n=N=5$.

\end{abstract}
\maketitle


\section{Introduction}
Initiated in the late 80s by the physicists (cf. \cite{IV90,V89,W93}), lots of effort has been given to the study of the so--called \textit{Landau--Ginzburg orbifolds}. These are the pairs $(f,G)$, for $f = f(\bx) \in \CC[x_1,\dots,x_N]$ having only isolated critical points and $G$ being a \textit{group of symmetries} of $f(\bx)$. Namely, a group of elements $g \in \mathrm{End}(\CC^N)$, s.t. $f(g \cdot \bx) = f(\bx)$. These pairs appeared to play an imporant role in mirror symmetry. One associates two vector spaces to every such pair: an \textit{A--vector space} and \textit{B--vector space}, both being of the same dimension, but different in the construction. 

Mirror symmetry conjectures that there is a \textit{dual} pair $(\widetilde f, \widetilde G)$, such that its A--vector space (resp. B--vector space) is isomorphic to the B--vector space (resp. A--vector space) of the pair $(f,G)$. Such an isomorphism is called \textit{mirror map}. All the other mirror symmetry results can only be obtained after the good mirror map is settled.

In this paper we focus on the Fermat polynomial case
\[
 f = x_1^n + \dots + x_N^n.
\]
For it we have $\widetilde f(\bx) = f(\bx)$, however the construction of the dual group $\widetilde G$ is more involved.

The B--vector space is given by the Hochschild cohomology $\ccHH^\ast(f,G)$ of the category of $G$--equivariant matrix factorizations of $f$. Denote by $f^g$ the restriction of $f$ to $\Fix(g)$ --- eigenvalue $1$ subspace of $g$ in $\CC^N$, and set $\A_{f,g}' := \Jac(f^g)$ --- the Jacobian algebra of $f^g$. We have (cf. \cite{S20})
\[
    \ccHH^\ast(f,G) \cong \left( \A'_{f,G} \right)^G \ \text{ for} \ \A'_{f,G} := \bigoplus_{g \in G} \A'_{f,g}.
\] 
The action of $v \in G$ is s.t. we have $v^*: \A'_{f,u} \to \A'_{f,v u v^{-1}}$ for any $u \in G$.
The subspaces $\A'_{f,u}$ are called \textit{narrow sectors} if $\Fix(u) = 0$ and \textit{broad sectors} otherwise.

The A--vector space is constructed via the FJRW theory that is only defined for the groups $G$ acting diagonally (cf. \cite{FJR}, see also \cite{WWP} for some examples). However, one expects it to have the same vector space structure as $\ccHH^\ast(f,G)$ above. This allows one to consider mirror map as an involutive vector space isomorphism $\ccHH^\ast(f,G) \cong \ccHH^\ast(\widetilde f, \widetilde G)$.

\subsection{Diagonal symmetry groups}
Consider the \textit{maximal group of diagonal symmetries} 
\[  
G_f^d := \lbrace g = (g_1,\dots,g_N) \in (\CC^\ast)^N \ \mid \ f(g \cdot \bx) = f(\bx) \rbrace.
\]
We call $G$ a diagonal symmetry group if $G \subseteq G_f^d$. 
For such groups 
the notion of a dual group $\widetilde G$ was introduced by Berglund-H\"ubsch-Henningson in \cite{BH95,BH93} and the mirror map was found by M.Krawitz in \cite{K09}. The mirror map of Krawitz always interchanges broad and narrow sectors.

Especially important are the groups $\SL_f$ and $\J$, for
\begin{align}
    \SL_f &:= \lbrace g = (g_1,\dots,g_N) \in G_f^d \ \mid \ \prod_{i=1}^N g_i = 1 \rbrace,
    \\
    J &:= (\exp(2 \pi \sqrt{-1}/ n ), \dots, \exp(2 \pi \sqrt{-1}/n )).
\end{align}
These groups are dual to each other.

Diagonal symmetry group $G$ is always abelian what simplifies significantly the computations. In particular, the $G$--action does not mix up the sectors of $\A'_{f,G}$. 
For $f$ being invertible polynomial the corresponding FJRW and Hochschild cohomology rings were computed in \cite{FJJS,BT2,BTW16,BTW17}.

\subsection{Fermat quintic with nonabelian group of symmetries}
In \cite{IV90} the authors introduced two functions $q_l,q_r: \A'_{f,G} \to \QQ$ that provide the bigrading of $\ccHH^*(f,G)$. This bigrading was used in \cite{Muk} to get the Hodge numbers $h^{p,q}(f,G)$ for a B--vector space. For $N=n=5$ and $G$ preserving the volume form, Mukai has shown (Theorem 7.1 in loc.cit.) that 
$h^{p,q}(f,G) = h^{3-p,q}(X)$, for $X$ being the mirror Calabi-Yau quintic of $(f,G)$.

This approach was extended further in \cite{EGZ18,EGZ20} by Ebeling and Gusein--Zade who have considered the mirror pairs $(f, S \ltimes G^d)$, $(\widetilde f, S \ltimes \widetilde G^d)$ with $\J \subseteq G^d \subseteq \SL_f$ from the point of view of Hodge theory of the Milnor fibres. 
They have shown that for two mirror Calabi-Yau quintics $X$ and $\widecheck{X}$ of these pairs, the equality $h^{1,1}(X) = h^{2,1}(\widecheck{X})$ holds if and only if the following \textit{parity condition holds}.
\begin{equation}\label{eq: PC}
     \text{for any } T \subseteq S \text{ holds } \dim \left( \CC^N \right)^T \equiv N \mod 2.
     \tag{PC}
\end{equation}
In particular, $S$ can only satisfy PC if $S \subseteq A_N$.
This condition was conjectured to be necessary for mirror symmetry to hold.


\subsection{In this paper}
We focus in this paper on the case of prime $N$ and  nonabelian symmetry groups $G = S \ltimes \SL_f$ and $\widetilde G = S \ltimes \langle J \rangle$ with $S \subseteq S_N$. The corresponding Hochschild cohomology groups were computed in \cite{BI} via the technique of \cite{S20}.

We introduce the mirror map $\tau: \ccHH^\ast(f, S \ltimes \SL_f) \to \ccHH^\ast(f, S \ltimes \langle J \rangle)$. Our mirror map coincides with the mirror map of Krawitz on $\A'_{f,u}$, s.t. $u \in G^d$.

It is easy to see that there is no mirror map for $n \neq N$ because the dimensions do not match (see Example~1 in Section~\ref{section: mirror map examples}). 
For $n=N$ we show (Theorem~\ref{theorem: general} in the text).
\begin{theorem}\label{theorem 1}
    The map $\tau$ establishes an isomorphism $\A_{f,S \ltimes \SL_f}^{stable} \to \A_{f,S \ltimes \J}^{stable}$ between the certain subspaces of $\ccHH^\ast(f, S \ltimes \SL_f)$ and $\ccHH^\ast(f, S \ltimes \langle J \rangle)$ respectively. 
    In particular, these subspaces coincide with the whole vector spaces if the group satisfies \eqref{eq: PC} of Ebeling--Gusein-Zade.
    
    Under this isomorphism we have $q_l(X) = q_l(\tau(X))$ and $q_r(X) = N-2-q_r(\tau(X))$
    for any homogeneous $X \in \A_{f,S \ltimes \SL_f}^{stable}$.
\end{theorem}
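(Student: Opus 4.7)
The plan is to work with the sector decomposition from \cite{BI},
\[
\ccHH^\ast(f, S \ltimes G^d) \cong \bigoplus_{[g]} \bigl(\A'_{f, g}\bigr)^{C_G(g)},
\]
indexed by conjugacy classes $[g]$ in $G = S \ltimes G^d$, with $C_G(g)$ the centralizer. Writing $g = s u$ with $s \in S$ and $u \in G^d$, each cycle $C$ of $s$ on which the twisted product $\prod_{i \in C} u_i$ equals $1$ contributes one variable to $\Fix(g)$, and the restricted polynomial $f^g$ is a rescaled Fermat of degree $n$ in those variables. The assumption that $n = N$ is prime enters crucially here: on the $\J$-side, where all diagonal entries coincide, a contributing cycle must have length $1$ or $N$, and this rigidifies the combinatorics of matching sectors across the mirror.

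Next I would define the map $\tau$ sector by sector. On the $s = 1$ part, declare $\tau$ to coincide with the classical Krawitz mirror $\A'_{f,u} \to \A'_{f, u^\vee}$. For a sector with $s \ne 1$, apply Krawitz to the restricted Fermat $f^g$ on $\Fix(g)$ while converting the remaining diagonal data via the $\SL_f \leftrightarrow \J$ duality and retaining the permutation component $s$ unchanged. Since $S$ sits identically on both sides of the mirror, the construction is automatically $S$-equivariant, and compatibility with the diagonal part of the centralizer reduces to the standard abelian Krawitz check.

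I would then carve out $\A_{f, S \ltimes \SL_f}^{stable}$ as the span of those sectors whose conjugacy class in $S \ltimes \SL_f$ admits a matching conjugacy class in $S \ltimes \J$ under this prescription. The obstruction to stability is precisely when a cycle of $s$ in an $\SL_f$-sector has a length incompatible with $\J$ on the dual side, and the Ebeling--Gusein-Zade parity condition \eqref{eq: PC} rules out exactly such discrepancies on every invariant subspace. Thus under PC every sector is stable and $\tau$ is an isomorphism between the full Hochschild cohomologies.

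For the bigrading assertion I would track $q_l$ and $q_r$ contribution by contribution. Krawitz preserves $q_l$ on the fixed Fermat and sends its right charge $q_r$ to $N_{\rm eff} - 2 - q_r$ with $N_{\rm eff} = \dim \Fix(g)$, while the age-type contribution of the broken directions complementary to $\Fix(g)$ is symmetric across $\tau$; combining these yields $q_l(X) = q_l(\tau(X))$ and $q_r(X) = N - 2 - q_r(\tau(X))$. The main obstacle I anticipate is the $S$-equivariance check for sectors where $s$ has several cycles of the same length: the centralizer there acts through a wreath product, matching Krawitz partners across these cycles is combinatorially delicate, and this is exactly where the stability notion --- and under PC, the full mirror isomorphism --- does the essential work.
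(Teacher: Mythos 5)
Your overall skeleton --- the conjugacy-class/centralizer decomposition, a cycle-wise Krawitz-type map that leaves the permutation part untouched, a centralizer-invariance check, and a bidegree count --- matches the paper's route. However, there is a genuine gap in how you identify the stable subspace and in the role you assign to \eqref{eq: PC}. In the paper, stability of the $u$-th sector is a \emph{sign} condition: it demands $\sigma^*(\xi_u)=\xi_u$ for every permutation $\sigma\in Z(u)\cap S$, where by Eq.~\eqref{eq: group action} the action is $\sigma^*(\xi_u)=\frac{\det(\sigma|_{\Fix(u)})}{\det(\sigma)}\,\xi_u=\pm\xi_u$, a sign controlled by the parities of $\dim\Fix$ of the relevant elements; Proposition~\ref{prop: PC action} shows that \eqref{eq: PC} is exactly the condition forcing all these signs to be $+1$. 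Your proposed characterization --- sectors whose conjugacy class ``admits a matching conjugacy class'' on the $\J$-side, with the obstruction being a cycle length ``incompatible with $\J$'' --- is not what happens: every sector has a combinatorial partner under $\tau$; the failure mode is that some $\sigma'\in Z(u)\cap S$ acts by $-1$ on the generator of a sector on one side while acting by $+1$ on its partner sector on the other (the fixed loci of $u$ and of its mirror partner differ), so the invariant subspace is killed on one side only. Without this sign analysis you can prove neither that $\tau$ restricts to an isomorphism of the stable parts nor that PC implies stability of everything; this is the essential content of the theorem.

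A second, smaller but real gap concerns well-definedness of $\tau$ on broad sectors. For a nonzero invariant $X=\prod_{a}\lfloor\widetilde x_{i_a}^{r_a}\rfloor\xi_{\sigma_a}$ on the $\SL_f$-side, $\tau(X)$ must be a single narrow generator $\xi_{\sigma J^k}$, which requires the integers $k_a$ determined by $r_a+1\equiv k_a|\sigma_a|\pmod N$ to coincide for all cycles $\sigma_a$. This is Proposition~\ref{prop: equal exponents in SL}, proved by acting with diagonal centralizer elements of the form $(\prod_{i\in I^c_{\sigma_1}}t_i)^{\alpha}(\prod_{j\in I^c_{\sigma_2}}t_j)^{\beta}\in\SL_f$, and it is here (and in the invertibility of $|\sigma_a|$ modulo $N$) that primality of $N$ is genuinely used --- not in the cycle-length rigidity on the $\J$-side that you invoke. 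Your proposal does not address this issue, nor does your bigrading sketch carry out the age computation for $\prod_a\sigma_aJ^k$ (the correction terms $l_a$ counting eigenvalue phases that wrap past $1$), which is where the actual identity $q_r(X)=N-2-q_r(\tau(X))$ comes from.
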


It's important to note that our mirror map establishes the isomorphism between the whole spaces $\ccHH^\ast(f, S \ltimes \SL_f)$ and $\ccHH^\ast(f, S \ltimes \langle J \rangle)$ for some examples, for which PC does not hold.

For the special case $N=n=5$ we can make even a stronger result (see Theorem~\ref{theorem: quintic} and it's corollary in the text).

\begin{theorem}\label{theorem 2}
    For $N=5$ there is a vector space isomorphism ${\ccHH^\ast(f, S \ltimes \SL_f) \to \ccHH^\ast(f, S \ltimes \langle J \rangle)}$ for any group $S \subseteq S_5$. 
    
    Moreover, for $X$ and $\widecheck{X}$ being the mirror quintics of $(f,S\ltimes\SL)$ and $(f,S\ltimes\J)$ respectively, we have the following relation between the Hodge numbers
    \[
        h^{(1,1)}(X) + h^{(2,1)}(X) = h^{(1,1)}(\widecheck{X}) + h^{(2,1)}(\widecheck{X}).
    \]
\end{theorem}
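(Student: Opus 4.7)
The plan is to extend Theorem~\ref{theorem 1} from the stable subspaces to the whole Hochschild cohomology in the case $N=n=5$, and then derive the Hodge-number identity from the bigraded refinement of the isomorphism. Theorem~\ref{theorem 1} already delivers the isomorphism on $\A^{stable}_{f,S\ltimes\SL_f}$ and $\A^{stable}_{f,S\ltimes\J}$, so the task reduces to treating the ``unstable'' complementary sectors that appear precisely when $S$ violates \eqref{eq: PC}.

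First I would enumerate the subgroups $S\subseteq S_5$ for which PC fails. Since an element $\sig\in S_N$ acting by permutation of coordinates has $\dim\Fix(\sig)$ equal to its number of cycles (fixed points included), PC fails for $N=5$ exactly when $S$ contains an element with an even number of cycles, i.e.\ a transposition, a $4$--cycle, or a $(3,2)$--element. Up to conjugacy in $S_5$ this is a short finite list containing $S_2,S_3,S_4,S_5$ and the cyclic group generated by a $4$--cycle. For each such $S$ I would use the explicit description of the sectors $\A'_{f,u}$ from \cite{BI}, exploiting the fact that $5$ is prime: on any $\Fix(u)$ the restriction $f^u$ is again a Fermat quintic and $\Jac(f^u)$ is a tensor power of $\CC[x]/(x^3)$ with a transparent $S$--action. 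With the sectors in hand, one computes the $S\ltimes\SL_f$-- and $S\ltimes\J$--invariants on the unstable broad sectors, matches dimensions, and enhances $\tau$ by Krawitz's duality tensored with an explicit intertwiner of $S$--representations. The hardest step here is the $S$--equivariance: the $\SL_f$-- and $\J$--sides differ by a $\det$--type sign character of $S$ on the broad Jacobian factors, and one must check that after Krawitz's variable swap this character is cancelled by the natural $S$--action on the dual Jacobian generators. Because the list of groups is short and all relevant Jacobians are concretely described, this becomes a finite verification rather than a conceptual obstruction.

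Finally, the Hodge-number identity follows from the bigraded enhancement. By Theorem~\ref{theorem 1} and the extension above, the isomorphism preserves $q_l$ and sends $q_r$ to $3-q_r$. Through the Intriligator--Vafa/Chiodo--Ruan bigraded correspondence used by Mukai in \cite{Muk} the Hodge numbers $h^{1,1}(X)$ and $h^{2,1}(X)$ of the mirror quintic of $(f,S\ltimes\SL_f)$ read off the pieces of $\ccHH^*(f,S\ltimes\SL_f)$ with $q_l=1$, $q_r\in\{1,2\}$, and analogously for $\widecheck X$. Summing the isomorphism's matching over these two bidegrees — the $q_r\mapsto 3-q_r$ twist merely permutes the two summands — yields $h^{1,1}(X)+h^{2,1}(X)=h^{1,1}(\widecheck X)+h^{2,1}(\widecheck X)$.
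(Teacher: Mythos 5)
Your reduction to ``extend $\tau$ over the unstable sectors'' is the right starting point, but the plan has two genuine gaps. First, the bookkeeping of which sectors need treatment is wrong: \eqref{eq: PC} is a condition on \emph{all} subgroups $T\subseteq S$, not just cyclic ones, so ``$S$ contains an element with an even number of cycles'' is not equivalent to PC failing (the Klein $4$--group $\langle (1,2)(3,4),(1,3)(2,4)\rangle$ has every nontrivial element with $3$ cycles, yet $\dim(\CC^5)^T=2$ for the whole group, so PC fails — and this is exactly one of the nontrivial worked cases). Moreover unstable sectors do not ``appear precisely when $S$ violates PC'': the paper notes the converse of Proposition~\ref{prop: PC action} fails, e.g.\ for $S=S_3$ the stable subspace is everything even though PC fails. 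Second, and more seriously, the actual mechanism that makes the extension work is missing. On the unstable sectors the naive $\tau$ sends nonzero classes to classes that \emph{vanish} after taking invariants, and no sign-twisted version of Krawitz's narrow$\leftrightarrow$broad swap can fix this: as Example~4 already shows, for $S=S_5$ the subspace $\A_{\SL,s}^{(2)}$ of broad classes must be matched with the broad subspace $\A_{\J,s}^{(2)}$, not with narrow sectors. The paper's proof therefore replaces $\tau$ by a two-component map $\widehat\tau$ (with a broad-to-broad piece $-\widehat\tau_3$ and a narrow piece $\widehat\tau_4$), introduces the correction operator $K$ and sets $\widetilde\tau=\tfrac12(\widehat\tau\circ K+\widehat\tau)$; the $N=5$ specificity enters in showing that $\widehat\tau_1(Y)$ and $\widehat\tau_2(Y)$ cannot vanish simultaneously, the only delicate case being $u=(i,j)(k,l)t_i^dt_k^dt_m^{-2d}$. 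Your ``explicit intertwiner of $S$--representations'' does not identify this phenomenon, so the finite verification you propose would stall exactly where the real content lies.

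This also invalidates your derivation of the Hodge-number identity: you assume the full isomorphism preserves $q_l$ and sends $q_r\mapsto 3-q_r$, but that is false on the unstable part — in Example~4 the map $\A_{\SL,s}^{(2)}\to\A_{\J,s}^{(2)}$ sends bidegree $(1,1)$ to $(1,1)$ and $(2,2)$ to $(2,2)$, with no flip. The bigraded statement of Theorem~\ref{theorem 1} holds only on $\A^{stable}$. The paper's corollary instead argues without any bigraded compatibility of $\widetilde\tau$: both phase spaces have $1$--dimensional pieces in bidegrees $(0,0),(3,0),(0,3),(3,3)$ and carry a bidegree-respecting pairing forcing $h^{1,1}=h^{2,2}$ and $h^{2,1}=h^{1,2}$, so each total dimension equals $4+2(h^{1,1}+h^{2,1})$ and the identity follows from equality of total dimensions alone. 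You should replace your last paragraph by this counting argument.
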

This theorem makes use of another mirror map, that generalizes the mirror map $\tau$ and also the mirror map of Krawitz, but does not always send the narrow sectors to broad and vise versa.

We also provide many examples in  Section~\ref{section: mirror map examples} and Section~\ref{section: examples2}.

\subsection*{Acknowledgement}~
\\
The authors acknowledge partial support by RSF grant no. 19-71-00086 and by International Laboratory of Cluster Geometry HSE University, RF Government grant. 
In particular, the proof of Theorem~\ref{theorem 1} was obtained under the support of RSF grant no. 19-71-00086 
and the proof of Theorem~\ref{theorem 2} was obtained under the support of International Laboratory of Cluster Geometry HSE University, RF Government grant.

The authors are grateful to anonymous referee for many important remarks.

\section{Preliminaries and notation}\label{section: preliminaries}
For any $u \in S_N \ltimes G_f^d$ we will denote $u = \sigma \cdot g$ assuming that $\sigma \in S_N$ and $g \in G_f^d$.
Let $\Fix(u)$ be the eigenvalue $1$ subspace of $\CC^N$ of $u$ and $I_u^c$ be the set of all indices $k$, s.t. $u \cdot x_k \neq x_k$. Restriction of $f$ to $\Fix(u)$, $f^u := f \mid_{\Fix(u)}$ is a Fermat type polynomial again.

Let $\sigma = \prod_{a=1}^p \sigma_a$ be the decomposition into the non--intersecting cycles. Denote by $|\sigma_a|$ the length of the cycle $\sigma_a$. We will also allow $\sigma_a$ to be of length $1$, so that we always have $\sum_{a=1}^p | \sigma_a | = N$. 
There exists the unique set $g_1,\dots,g_p$ of $G_f^d$--elements, s.t. $g_a$ acts non--trivially only on $I^c_{\sigma_a}$ and $\sigma \cdot g = \prod_{a=1}^p \sigma_a g_a$. We call the product $\sigma \cdot g = \prod_{a=1}^p \sigma_a g_a$ \textit{generalized cycle decomposition} of $u$.

A generalized cycle $\sigma_ag_a$ is said to be \textit{special} if $\det(g_a) = 1$ and \textit{non--special} otherwise. It is clear that $\Fix(\sigma_ag_a)\cap \CC^{I_{\sigma_ag_a}^c}= 0$ for a non--special cycle, where by $\CC^{I_{u}^c}$ we mean the subspace of $\CC^N$ spanned by standard basis vectors with indices in $I_{u}^c$. For a special cycle we have $\dim\Jac(f^{\sigma_ag_a}\mid_{\CC^{I_{\sigma_ag_a}^c}}) = n-1$. 
Denote by $\lfloor \phi(\bx) \rfloor$ the class of the polynomial $\phi(\bx)$ in $\Jac(f^{\sigma_ag_a})$.
Let $\widetilde x_{i_a}$ be the $\sigma_ag_a$-invariant linear combination of $x_\bullet$ with indexes in $I_{\sigma_ag_a}^c$, s.t. $\Jac(f^{\sigma_ag_a}|_{I_{{\sigma_ag_a}}^c})$ has the basis $\lfloor \widetilde x_\bullet^k\rfloor$, $k=0,\dots,n-2$. Set
\[
    \A'_{\sigma_ag_a} := \langle \lfloor 1 \rfloor, \lfloor \widetilde x_{i_a} \rfloor, \dots, \lfloor \widetilde x_{i_a}^{n-2} \rfloor \rangle \xi_{\sigma_ag_a},
\]
where we denote by $\xi_{\sigma_ag_a}$ the formal letter associated to $\sigma_ag_a$. 
The elements of $\A'_{\sigma_ag_a}$ will be denoted by $\lfloor \phi(\bx) \rfloor \xi_{\sigma_ag_a}$.

In particular, for $g_a = \id$ we have $\widetilde x_{i_a} = \sum_{i} x_i$ where the summation is taken over $i \in I^c_{\sigma_a}$. We adopt the notation above for the non--special cycles too, assuming $\lfloor \widetilde x_{i_a}^0 \rfloor \xi_{\sigma_a g_a} = \lfloor 1 \rfloor \xi_{\sigma_a g_a}$. For $u\in G$ with generalized cycle decomposition $u= \prod_{a=1}^p \sigma_a g_a$ we have 
\[
    \A'_{f,u}=\bigotimes_{a=1}^p \A'_{\sigma_ag_a}.
\]    

\indent Fix $\zeta_n := \exp ( 2 \pi \sqrt{-1})/n )$ and $t_k \in G_f^d$ with $k=1,\dots,N$ by 
\[
    t_k: (x_1,\dots,x_N) \to (x_1,\dots, \zeta_n x_k, \dots, x_N).
\]
Then the Fermat type polynomial maximal diagonal symmetries group $G_f^d$ is generated by $t_1,\dots,t_N$. Denote also
\[
    \SL_f := \left\lbrace g \in G_f^d \ | \ \det(g) = 1 \right\rbrace, \quad J := t_1\cdots t_N.
\]
The groups $S \ltimes \SL_f$ and $S \ltimes \J$ with $S \subseteq S_N$ will be in particular important in this paper.

\subsection{The phase space}
In what follows we need to consider $\ccHH^*(f,G)$ as a vector space for the fixed $S \subseteq S_N$ and different $G^d \subseteq G_f^d$. To make it easier we consider the space $\A_{tot} := \bigoplus_{u \in S \ltimes G_f^d} \A'_{f,u}$. We call its direct summand $\A'_{f,u}$ the \textit{$u$-th sectors}. 

For any $G \subseteq S_N \ltimes G_f^d$ we denote by $\A_{f,G}$ the \textit{phase space} of $G$, being the subspace of $\A_{tot}$ defined as follows.
Let $\C^G$ stand for the set of representatives of the conjugacy classes of $G$. 
Denote
\begin{equation}\label{eq: full algebra decomposition}
    \A_{f,G} :=  \bigoplus_{u \in \C^G} \left( \A_{f,u}' \right)^{Z(u)},
\end{equation}
where the action of $v \in Z(u)$ on $\A_{f,u}'$ is computed as follows.
Let $\lambda_k^u,\lambda_k^v$ be the eigenvalues of $u$ and $v$ computed in their common eigenvectors basis. For $X = \lfloor \phi(\bx) \rfloor \xi_u \in \A_{f,u}'$ and $v \in Z(u)$ we have
\begin{equation}\label{eq: group action}
    v^*\left( X \right) = \prod_{\substack{ k=1,\dots,N \\ \lambda_k^u \neq 1}} \frac{1}{\lambda_k^v} \lfloor \phi(v \cdot \bx) \rfloor \xi_u = ( \det(v_{\mid_{\Fix(u)}}) )^{-1} \lfloor \phi(v \cdot \bx) \rfloor \xi_u.
\end{equation}
This is the particular case of the $G$--action of $\ccHH^*(f,G)$. 
Moreover we have (cf. Proposition~42 in \cite{BI})
\begin{equation}
    \ccHH^*(f,G) \cong \A_{f,G}.
\end{equation}
This isomorphism allows us to consider $\ccHH^*(f,G)$ with the different groups $G$ as the subspaces of $\A_{tot}$.

For any $G_1,G_2 \subseteq S_N \ltimes G_f^d$ we have the natural inclusion $i_1: \A'_{f,G_1}\to\A_{tot}$ and the projections $\pi_2: \A_{tot} \to \A'_{f,G_2}$. In what follows we will consider the maps $\psi: \A'_{f,G_1} \to \A'_{f,G_2}$ via the maps $\widetilde \psi: \A_{tot} \to \A_{tot}$ by $\psi := \pi_2 \circ \widetilde \psi \circ i_1$.

With respect to a generalized cycle decomposition $u = \prod_a \sigma_ag_a$ we have the relation $\xi_u = \prod_{a=1}^p \xi_{\sigma_ag_a}$ between the generators of the different vector spaces $\A_{f,G_1}$, $\A_{f,G_2}$. This extends to the product of arbitrary $X_1 = \lfloor \phi_1 \rfloor \xi_{u}$ and $X_2 = \lfloor \phi_2 \rfloor \xi_{v}$ assumed as $\A_{tot}$--elements by $X_1X_2 := \lfloor \phi_1 \phi_2 \rfloor \xi_{uv}$ when $I_u^c \cap I_v^c = \emptyset$. This is not to be confused with the cup-product on Hochscild chohomology as in \cite{BI}.

\subsection{Bigrading}\label{section: bigrading}
Let $\CC[x_1,\dots,x_N]$ be graded by setting $\deg(x_k) := 1/n$. Extend this grading to $\A_{tot}$ as follows. For any $u \in S_N\ltimes G_f^d$ let $\lambda_1,\dots,\lambda_N \in \CC$ be the eigenvalues of the linear transformation $\bx \mapsto u \bx$. We may assume $\lambda_k = \exp(2 \pi \sqrt{-1} a_k)$ for some $a_k \in \QQ \cap [0,1)$. Denote:
\[
    \age(u) := \sum_{k=1}^N a_k.
\]
Then for the inverse element $u^{-1}$ we have
\[
    \age(u) + \age(u^{-1}) = N - \dim\Fix(u) = d_u.
\]
For any homogeneous $p \in \CC[\bx]$ assume the element $\lfloor p\rfloor \xi_{u} \in\A_{tot}$. Define its {\itshape left charge} $q_l$  and {\itshape right charge} $q_r$ to be 
\[
    (q_l, q_r) = \left( \frac{\deg p- d_{u}}{n}+{\rm\age}{(u)}, \frac{\deg p- d_{u}}{n}+{\rm\age}{(u^{-1})} \right).
\]
This definition endows $\A_{tot}$ with the structure of a $\QQ$-bigraded vector space. 
This is exactly the bigrading introduced in \cite{Muk, IV90}. It follows immediately that $q_\bullet (\xi_u) + q_\bullet (\xi_v) = q_\bullet (\xi_{uv})$ for $u,v\in G$, s.t. $I_u^c \cap I_v^c = \emptyset$. 

Denote by $h^{p,q}(f,G)$ the dimension of the space of bigrading $(p,q)$ elements of $\A_{f,G}$.

\section{Mirror map}
In this section we define the mirror map $\tau: \A_{tot} \to \A_{tot}$. It will be used later in Theorem~\ref{theorem: general} to set up a mirror isomorphism.
First consider the following examples. We do not give all computations explicitly, referring interested reader to Examples section of \cite{BI}.

\subsection{Examples}\label{section: mirror map examples}
Example~1 shows that there is no mirror map if $N \neq n$. 
Example~2 considers the case when there is a mirror map, interchanging broad and narrow sectors. 
Example~3 deals with the symmetry group that does not satisfy PC condition of Ebeling--Gusein--Zade. Even though, the mirror map exists.
Example~4 depicts the situation when mirror map exists but does not always interchange broad and narrow sectors as it is for diagonal symmetry groups.

Examples 2 and 3 are the particular cases of Theorem~\ref{theorem: general} and Example~4 is the particular case of Theorem~\ref{theorem: quintic}.

In all the examples beneath we consider the groups $G = S \ltimes \SL_f$ and $\widetilde G = S \ltimes \J$ with different $S \subseteq S_N$. There is a decomposition
\[
    \A_{f,S \ltimes \SL_f} = \A_{\SL, d} \oplus \A_{\SL, s}, \quad \A_{f,S \ltimes \langle J \rangle} = \A_{\langle J \rangle, d} \oplus \A_{\langle J \rangle, s}
\]
for $\A_{\SL, d}$ and $\A_{\J, d}$ being the direct sums of all $u$--th sectors of $\A_{f,S\ltimes\SL_f}$ and $\A_{f,S\ltimes\J}$, s.t. $u \in \id \cdot \SL_f$ and $u \in \id \cdot \J$ respectively.


\subsubsection{Example 1: $N = 3$ and $n=4$, $S = S_3$}
We have 
\begin{align}
    \A_{S_3 \ltimes \SL_f,s} & = \CC \left\langle \lfloor (x_2-x_1)^2 \rfloor \xi_{(1,2,3) t_1^2 t_2^2 t_3^2} , \lfloor (x_1 + x_2 + x_3)^2 \rfloor \xi_{(1,2,3)}\right\rangle, 
    \\
    \A_{S_3 \ltimes \J,s} & = \CC \left\langle \lfloor (x_1 + x_2 + x_3)^2 \rfloor \xi_{(1,2,3)} \right\rangle
\end{align}
showing that there is no mirror map between
$\A_{f,S_3 \ltimes \SL_f}$ and $\A_{f, S_3 \ltimes \J}$ when $N \neq n$.

\subsubsection{Example 2: $N = n = 5$ and $S = \langle (1,2)(3,4) \rangle \subset S_5$}
Denote
\begin{align}
    \phi_{a,b,c}(\bx) := (x_1+x_2)^a(x_3+x_4)^bx_5^c, \quad a,b,c \ge 0.
\end{align}

We have 
\begin{align}
    & \A_{\J,s}  = \bigoplus_{k=1}^4 \CC \langle \xi_{(1,2)(3,4)J^k} \rangle 
    \bigoplus_{\substack{0 \le a,b,c \le 3 \\ a+b+c = 2 \\ \text{or } a+b+c = 7} }\CC\langle \lfloor \phi_{a,b,c}\rfloor \xi_{(1,2)(3,4)} \rangle,
    \\
    & \A_{\SL,s} = \bigoplus_{\substack{0 \le a,b,c \le 3 \\ a+b=2 \\ \text{or } a+b=7}} \CC \langle \lfloor \phi_{a,a,b}\rfloor \xi_{(1,2)(3,4)}\rangle
    \bigoplus_{\substack{1 \le a,b \le 4 \\ a + b \not\in 5\ZZ}} \CC\langle \xi_{(1,2)(3,4) t_1^at_3^bt_5^{5-a-b}} \rangle.
\end{align}

The vector space isomorphism $\A_{f,S\ltimes \SL_f} \to \A_{f,S\ltimes \J}$ is
\begin{align}
    \lfloor H^k\rfloor \xi_{\id} & \mapsto \xi_{J^{k-1}},
    \\
    \xi_{(1,2)(3,4) t_1^at_4^bt_5^c} &\mapsto \lfloor\phi_{a-1,b-1,c-1}\rfloor \xi_{(1,2)(3,4)}
    \\
    \lfloor \phi_{a,a,c}\rfloor \xi_{(1,2)(3,4)} & \mapsto \xi_{(1,2)(3,4)J^{c-1}}.
\end{align}
This map interchanges bidegree $(1,2)$ classes with bidegree $(1,1)$ classes and bidegree $(2,1)$ classes with bidegree $(2,2)$ classes.

The group considered can be diagonalized, however this requires also the change of the polynomial $f$ so that we will not have the relation $\widetilde f = f$ anymore. 

\subsubsection{Example 3: $N=n=5$, $S = \langle (1,2,3), (1,2) \rangle \subset S_5$}
Denote
\begin{align}
    \phi_{a,b,c}(\bx) := (x_1+x_2+x_3)^ax_4^bx_5^c, \quad a,b,c \ge 0.
\end{align}

We have 
\begin{align}
    & \A_{\J,s}  = \bigoplus_{k=1}^4  \CC \langle \xi_{(1,2,3)J^k} \rangle 
    \bigoplus_{\substack{0\le a,b,c \le 3 \\ a+b+c = 2 \\ \text{or } a+b+c = 7} }\CC\langle \lfloor \phi_{a,b,c}\rfloor \xi_{(1,2,3)} \rangle,
    \\
    & \A_{\SL,s} = \bigoplus_{\substack{a+b=2 \\ \text{or } a+b=7}}  \CC \langle \lfloor \phi_{a,b,b}\rfloor \xi_{(1,2,3)}\rangle
    \bigoplus_{\substack{1 \le a,b \le 4 \\ a + b \not\in 5\ZZ}}\CC\langle \xi_{(1,2,3) t_1^{5-a-b}t_4^at_5^b} \rangle.
\end{align}

The vector space isomorphism $\A_{f,S\ltimes \SL_f} \to \A_{f,S\ltimes \J}$ is
\begin{align}
    \lfloor H^k\rfloor \xi_{\id} & \mapsto \xi_{J^{k-1}},
    \\
    \xi_{(1,2,3) t_1^at_4^bt_5^c} &\mapsto \lfloor\phi_{a-1,b-1,c-1}\rfloor \xi_{(1,2,3)}
    \\
    \lfloor \phi_{a,b,b}\rfloor \xi_{(1,2,3)} & \mapsto \xi_{(1,2,3)J^{b-1}}.
\end{align}
This map interchanges bidegree $(1,2)$ classes with bidegree $(1,1)$ classes and bidegree $(2,1)$ classes with bidegree $(2,2)$ classes.

\subsubsection{Example 4: $N=n=5$, $S = S_5$}
Denote 
\begin{align*}
    \phi_{a,b,c}(\bx) &:= (x_1+x_2+x_3)^a (x_4^bx_5^c + x_4^cx_5^b), 
    \\
    \psi_{a,b}(\bx) &:= (x_1+x_2+x_3)^a (x_4x_5)^b.
\end{align*}
We have $\A_{\J,s} = \A_{\J,s}^{(1)} \oplus \A_{\J,s}^{(2)}$, $\A_{\SL,s} = \A_{\SL,s}^{(1)} \oplus \A_{\SL,s}^{(2)}$ with
\begin{align}
    & \A_{\J,s}^{(1)} = \CC\langle \lfloor \phi_{0,0,2}\rfloor \xi_{(1,2,3)}, \lfloor \phi_{1,0,1}\rfloor \xi_{(1,2,3)}, \lfloor\phi_{2,2,3}\rfloor \xi_{(1,2,3)}, \lfloor\phi_{3,1,3}\rfloor \xi_{(1,2,3)} \rangle,
    \\
    & \A_{\J,s}^{(2)} = \CC \langle \lfloor \psi_{0,1} \rfloor \xi_{(1,2,3)},\lfloor \psi_{1,3}\rfloor\xi_{(1,2,3)}, \lfloor \psi_{2,0}\rfloor\xi_{(1,2,3)}, \lfloor \psi_{3,2}\rfloor\xi_{(1,2,3)} \rangle,
    \\
    & \A_{\SL,s}^{(1)} = \CC \langle \xi_{(1,2,3) t_1t_4t_5^3}, \xi_{(1,2,3) t_1^2t_4t_5^2},\xi_{(1,2,3) t_1^3t_4^3t_5^4}, \xi_{(1,2,3) t_1^4t_4^2t_5^4}\rangle
    \\
    & \A_{\SL,s}^{(2)} =  \CC \langle \lfloor \psi_{0,1} \rfloor \xi_{(1,2,3)},\lfloor \psi_{1,3}\rfloor\xi_{(1,2,3)}, \lfloor \psi_{2,0}\rfloor\xi_{(1,2,3)}, \lfloor \psi_{3,2}\rfloor\xi_{(1,2,3)} \rangle .
\end{align}
We see that $\A_{\J, d}$ contains only narrow sectors, $\A_{\J,s}$ only broad sectors. $\A_{\SL,d}$ contains only broad sectors while $\A_{\SL,s}$ is a direct sum of both broad and narrow sectors.

It's easy to guess the vector space isomorphism $\A_{f,S_5\ltimes \SL_f} \to \A_{f,S_5 \ltimes \J}$ in this case.
\begin{align}
    \lfloor H^k\rfloor \xi_{\id} & \mapsto \xi_{J^{k-1}},
    \\
    \xi_{(1,2,3) t_1^at_4^bt_5^c} &\mapsto \lfloor\phi_{a-1,b-1,c-1}\rfloor \xi_{(1,2,3)}
    \\
    \lfloor \psi_{a,b} \rfloor \xi_{(1,2,3)} & \mapsto \lfloor \psi_{a,b} \rfloor \xi_{(1,2,3)} 
\end{align}
The first two lines of it interchange degree $(1,2)$ and $(2,1)$ classes with the degree $(1,1)$ and $(2,2)$ classes respectively. However the last line sets up the isomorphism $\A_{\J,s}^{(2)} \to \A_{\SL,s}^{(2)}$ mapping the degree $(1,1)$ and $(2,2)$ classes to the degree $(1,1)$ and $(2,2)$ classes again.

\subsection{Definition of the mirror map}\label{section: mm}
For any $u \in S \ltimes G_f^d$ let $u = \prod_{a=1}^p \sigma_a g_a$ be its generalized cycle decomposition. Arbitrary element $X \in \A_{f,u}'$ reads
\[
 X = \prod_{a=1}^p  \lfloor \widetilde x_{i_a}^{r_a} \rfloor \xi_{\sigma_a g_a}.
\]
with $r_a = 0$ if $\sigma_ag_a$ is non--special and $0\le r_a \le N-2$ if $\sigma_ag_a$ is special.

Set 
\[
 \tau\left( X \right) := \prod_{a=1}^p \tau\left(\lfloor \widetilde x_{i_a}^{r_a} \rfloor \xi_{\sigma_a g_a}\right)
\]
for the product on the right hand side understood in the sense of Section~\ref{section: preliminaries}. The map $\tau$ is defined on the generalized cycles as follows.

\subsubsection{Case 1:  $u = \sigma_1 \cdot g_1$ is a non--special cycle}
Assume $g_1 = \prod_{p} t_{p}^{d_{1,p}}$ with $p$ running over $I^c_{g_1}$. Denote $d_1 := \sum_p d_{1,p} \mod N$ with $1 \le d_1 \le N$. Set
\begin{align}
    \tau(\xi_{\sigma_1 \cdot g_1}) &:= \lfloor \widetilde x_{i_1}^{d_1-1} \rfloor \cdot \xi_{\sigma_1}.
\end{align}

\subsubsection{Case 2:  $u = \sigma_1 \cdot g_1$ is a special cycle}
For any $0 \le r_1 \le N-2$ set
\begin{align}
    \tau(\lfloor \widetilde x_{i_1} ^{r_1} \rfloor \xi_{\sigma_1 \cdot g_1}) &:= \xi_{\sigma_1 \cdot h}
\end{align}
for $h := (\prod_a t_a )^{k_1}$ with $a$ running over $I^c_{\sigma_1}$ and $k_1 \in 1,\dots,N-1$, the unique integer, s.t. $r_1+1 \equiv k_1 |\sigma_1|$ modulo~$N$. 

\subsection{Mirror map of Krawitz}
The mirror map $\tau$ generalizes the mirror map of Krawitz. Namely, he defines (cf. \cite{K09}) $\tau_{Kr}: \A_{f,\SL_f} \to \A_{f,\J}$ by the following rule
\[
    \prod_{a=1}^p x_p^{r_p} \xi_\id \mapsto \xi_{\prod_{a=1}^p t_p^{r_p+1}}, \quad \xi_{\prod_{a=1}^p t_p^{d_p}} \mapsto \prod_{a=1}^p x_p^{d_p-1} \xi_\id
\]
Let's see that our mirror map coincides with the mirror maps of Krawitz on $\A'_{f,u}$, s.t. $u = \id \cdot g$. 
The generalized cycle decomposition of such elements is given by length $1$ cycles $(a)$ so that we have $u = \prod_{a=1}^N (a) t_a^{d_a}$ for $g = \prod t_a^{d_a}$. We have $x_a = \widetilde x_a$ if $d_a \equiv 0$.

When $g = \id$ we have
\begin{align}
    & \tau(\prod_{a=1}^p \lfloor x_a^{r_a} \rfloor \xi_\id) = \prod_{a=1}^p \tau( \lfloor \widetilde x_a^{r_a} \rfloor \xi_\id ) 
    = \prod_{a=1}^p \left( \lfloor 1\rfloor \xi_{t_a^{r_a+1}}\right)
    = \lfloor 1\rfloor \xi_{\prod_{a=1}^p t_a^{r_a+1}}.
\end{align}
For $d_a \neq 0$ for all $a$ we have
\begin{align}
    & \tau(\lfloor 1 \rfloor \xi_{\prod_{a=1}^p t_a^{d_a}}) = \prod_{a=1}^p \tau( \lfloor 1 \rfloor \xi_{t_a^{d_a}}) 
    = \prod_{a=1}^p \lfloor \widetilde x_a^{d_a-1} \rfloor \xi_\id.
\end{align}

\section{The vector space structure}

In this section consider prime $N=n$ and arbitrary $S \subseteq S_N$. 
The aim of this section is to describe the structure of the vector spaces $\A_{f,S\ltimes \SL_f}$ and $\A_{S\ltimes \J}$. 

According to the definition, we should consider the sets of representatives of the conjugacy classes $\C^{S\ltimes\SL_f}$ and $\C^{S\ltimes\J}$. Let $\C^S$ be some set of representatives of the conjugacy classes of $S$.
We have
\[
    \C^{S \ltimes \J} = \lbrace \sigma \cdot J^k \ | \ \sigma \in \C^S, k = 0,\dots,N-1 \rbrace.
\]

The map $S\ltimes\SL_f\to S$ is compatible with the conjugation action. We chose representatives in $\C^{S\ltimes\SL_f}$ in a way compatible with the choice of $\C^S$. The set $\C^{S\ltimes\SL_f}$ is described in Proposition~\ref{prop: SL narrow-broad} below. 

Concerning the bases of $\A_{f,S\ltimes\SL_f}$, $\A_{f,S\ltimes\J}$ we show the following proposition.

\begin{proposition}\label{proposition: structure of the phase spaces}~
    \begin{enumerate}
     \item[(A).]     The basis of $\A_{f,S \ltimes \SL_f}$ can be chosen to consists of narrow vectors $\prod_a \xi_{\sigma_a t_{i_a}^{d_a}} $ and broad vectors $\prod_a \lfloor \widetilde x_{i_a}^{r_a} \rfloor \xi_{\sigma_a}$,
     \item[(B).]     The basis of $\A_{f,S \ltimes \J}$ can be chosen to consists of narrow vectors $\prod_a \xi_{\sigma_a J^q}$ and broad vectors $\prod_a \lfloor \widetilde x_{i_a}^{r_a} \rfloor \xi_{\sigma_a}$,
    \end{enumerate}
    with $0 \le r_a \le N-2$, $1 \le d_a \le N-1$ and $1 \le q \le N-1$ in both cases.
\end{proposition}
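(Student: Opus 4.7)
The plan is to apply the decomposition $\A_{f,G}=\bigoplus_{u\in\C^G}(\A'_{f,u})^{Z(u)}$ together with the tensor product $\A'_{f,u}=\bigotimes_a \A'_{\sigma_a g_a}$, and identify a basis of each piece through a uniform prescription on the factors. First I would fix the conjugacy class representatives. For $S\ltimes\J$ they are already given as $\sigma J^q$ with $\sigma\in\C^S$ and $0\le q\le N-1$, and the generalized cycle decomposition is $u=\prod_a\sigma_a g_a$ with $g_a=J^q|_{\CC^{I^c_{\sigma_a}}}$. For $S\ltimes\SL_f$ I would invoke Proposition~\ref{prop: SL narrow-broad} to describe $\C^G$, and then use that diagonal conjugation inside the centralizer of $\sigma_a$ modifies $g_a$ only by an element of zero total exponent on the cycle; since $N$ is prime, this freedom is sufficient to replace $g_a$ by $t_{i_a}^{d_a}$ when $d_a\not\equiv 0\pmod N$ and by $\id$ when $d_a\equiv 0\pmod N$, for any chosen index $i_a\in I^c_{\sigma_a}$.

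Second, with representatives fixed in this standard form, I would use the explicit bases of the factors $\A'_{\sigma_a g_a}$: one-dimensional with generator $\xi_{\sigma_a g_a}$ when $\sigma_a g_a$ is non-special, and $(N-1)$-dimensional with basis $\lfloor\widetilde x_{i_a}^{r_a}\rfloor \xi_{\sigma_a g_a}$, $0\le r_a\le N-2$, when it is special. Taking tensor products gives a basis of $\A'_{f,u}$ whose elements are products of narrow factors $\xi_{\sigma_a t_{i_a}^{d_a}}$ (with $1\le d_a\le N-1$) and broad factors $\lfloor\widetilde x_{i_a}^{r_a}\rfloor \xi_{\sigma_a}$ (with $0\le r_a\le N-2$), exactly as in (A). For $\J$ the analogous procedure produces narrow factors of the form $\xi_{\sigma_a J^q}$, well defined precisely when $|\sigma_a|\not\equiv 0\pmod N$, together with broad factors coming from $q=0$ or from $N$-cycles, giving (B).

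Finally, to obtain an honest basis of $\A_{f,G}$ I would take $Z(u)$-invariants using formula~\eqref{eq: group action}. The centralizer splits as a permutation centralizer of $\sigma$ in $S$ acting on cycles, times a diagonal part; the diagonal part acts on each factor by a scalar character, and primality of $N$ ensures these characters decouple factor-by-factor, restricting only the exponents $r_a$, $d_a$, $q$ without mixing narrow and broad factors, while the permutation part symmetrizes over groups of isomorphic cycles. I expect the main obstacle to be the careful bookkeeping in the mixed cases, where $u$ has both special and non-special cycles and the global constraint $\sum d_a\equiv 0\pmod N$ (for $\SL_f$) or the uniformity of the $J^q$-factor (for $\J$) couples the cycles: one must check that the resulting invariants still decompose into products of the two listed forms. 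The primality of $N=n$ is the essential hypothesis that makes this decoupling work.
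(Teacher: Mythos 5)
Your skeleton --- decompose $\A_{f,G}$ over conjugacy classes, normalize representatives via Proposition~\ref{prop: SL narrow-broad}, take tensor-product bases of the factors $\A'_{\sigma_ag_a}$, then pass to $Z(u)$-invariants --- is the same as the paper's. But the proposal stops exactly where the content of the proposition begins. The statement is not that each basis vector is a product of factors of the two listed kinds (that is automatic from $\A'_{f,u}=\bigotimes_a\A'_{\sigma_ag_a}$); it is that every surviving basis vector is \emph{either} a pure product of narrow factors \emph{or} a pure product of broad factors, i.e.\ that the mixed sectors contribute nothing after taking invariants. You flag this as ``careful bookkeeping'' and assert that the diagonal characters ``decouple factor-by-factor \dots without mixing narrow and broad factors,'' but that is neither proved nor an accurate description of what happens: the correct conclusion is that a sector whose generalized cycle decomposition contains both a special and a non-special cycle has \emph{zero} invariants, so the whole sector disappears. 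The paper's argument (Corollary~\ref{corollary: narrow classes in SL}) exhibits, for a special cycle $\sigma_1$ and a non-special cycle $\sigma_2$, the element $h=(\prod_{i\in I_{\sigma_1}^c}t_i)(\prod_{j\in I_{\sigma_2}^c}t_j)^{d}\in\SL_f\cap Z(u)$ with $|\sigma_1|+d|\sigma_2|\equiv 0 \pmod N$ (solvable because $N$ is prime and $|\sigma_2|<N$), which scales every $\lfloor\widetilde x_{i_1}^{r}\rfloor\xi_u$ by $\zeta_N^{r+1}\neq 1$. Nothing in your outline produces such an element or any substitute for it.

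Two further vanishing statements that your argument silently requires are also missing. First, for a single length-$N$ cycle $\sigma$ and $g\in\SL_f$ the sector $(\A'_{f,\sigma g})^{Z(\sigma g)}$ is zero (Proposition~\ref{long cycle invariants}, using $J\in Z(\sigma g)$ and $\det J=1$); without it, case (A) would contain broad vectors $\lfloor\widetilde x^{r}\rfloor\xi_{\sigma g}$ with $g\neq\id$ not of the stated form. Second, on the $\J$ side the broad sectors of $\sigma J^{q}$ with $q\not\equiv 0$, which for prime $N$ arise exactly from $N$-cycles and which your sketch explicitly keeps as ``broad factors coming from \dots $N$-cycles,'' in fact vanish because $J^{*}$ multiplies $\lfloor\widetilde x^{r}\rfloor\xi_{\sigma J^{q}}$ by $\zeta_N^{r+1}$ (Proposition~\ref{prop: J narrow-broad}); if they survived they would violate (B), since (B) only allows broad vectors attached to $\xi_{\sigma_a}$ with trivial diagonal part. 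Until these three vanishing arguments are supplied, the proposal establishes only a basis of each $\A'_{f,u}$ before invariants, not the claimed basis of $\A_{f,S\ltimes\SL_f}$ and $\A_{f,S\ltimes\J}$.
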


Note that this proposition does not guarantee that all vectors of the form given do appear in the bases of $\A_{f,S\ltimes \SL_f}$ and $\A_{f,S\ltimes \J}$.
The proof is given in the following sections beneath.

\subsection{The group $G = S \ltimes \SL_f$}

\begin{proposition}\label{long cycle invariants}
    For any $g \in \SL_f$ and length $N$ cycle $\sigma$ we have $\left( \A_{f,\sigma \cdot g}' \right)^{Z(\sigma \cdot g)} = 0$.
\end{proposition}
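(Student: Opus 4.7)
The plan is to exhibit a single element of $Z(\sigma\cdot g)\cap (S\ltimes\SL_f)$ whose action on $\A'_{f,\sigma\cdot g}$ already has trivial fixed subspace, and the natural candidate is $J$ itself. Since $J=\zeta_n\cdot\mathrm{Id}$ is a scalar matrix, it is central in $S_N\ltimes G_f^d$, so $J\in Z(\sigma\cdot g)$; moreover $\det J=\zeta_n^N=1$ places $J$ in $\SL_f\subseteq S\ltimes\SL_f$. Once the $J$-action is understood the conclusion is immediate.

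First I would determine the structure of $\A'_{f,\sigma\cdot g}$. Because $g\in\SL_f$ the generalized cycle $\sigma\cdot g$ is special, so by the description recalled in Section~\ref{section: preliminaries} the space $\A'_{f,\sigma\cdot g}$ has a basis $\{\lfloor\widetilde x^{r}\rfloor\xi_{\sigma\cdot g}\}_{r=0}^{N-2}$. Here I would verify $\dim\Fix(\sigma\cdot g)=1$ directly: writing $g=\prod_i t_i^{d_i}$ with $\sum_i d_i\equiv 0\pmod N$ and solving $(\sigma\cdot g)w=\mu w$ by a recursion along the cycle produces the consistency condition $\mu^N=\prod_i\zeta_n^{d_i}=1$, so all $N$ of the $N$-th roots of unity occur as simple eigenvalues and the $\mu=1$ eigenspace is one-dimensional, spanned by $\widetilde x$.

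Next I would compute the $J$-action via \eqref{eq: group action}. Since $\sigma\cdot g$ has exactly $N-1$ non-unit eigenvalues, on each of which $J$ acts by $\zeta_n$, the prefactor $\prod_{k:\lambda_k^{\sigma\cdot g}\neq 1}(\lambda_k^J)^{-1}$ equals $\zeta_n^{-(N-1)}=\zeta_n$, while $J\cdot\widetilde x=\zeta_n\widetilde x$ produces an extra $\zeta_n^{r}$ from $\phi(J\bx)=\zeta_n^{r}\widetilde x^{r}$. Combining these,
\[
    J^{\ast}\bigl(\lfloor\widetilde x^{r}\rfloor\xi_{\sigma\cdot g}\bigr)=\zeta_n^{r+1}\,\lfloor\widetilde x^{r}\rfloor\xi_{\sigma\cdot g}.
\]

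To conclude: as $N=n$ is prime and $r+1$ ranges over $\{1,\dots,N-1\}$ for $r\in\{0,\dots,N-2\}$, every eigenvalue $\zeta_n^{r+1}$ is a non-trivial $N$-th root of unity, so no basis vector is $\langle J\rangle$-invariant, and since $J$ acts diagonally in this basis the whole invariant subspace vanishes. Hence $(\A'_{f,\sigma\cdot g})^{Z(\sigma\cdot g)}\subseteq (\A'_{f,\sigma\cdot g})^{\langle J\rangle}=0$. The only place to be careful is the bookkeeping of the prefactor in \eqref{eq: group action}; otherwise there is no real obstacle.
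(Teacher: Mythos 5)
Your proof is correct and takes essentially the same route as the paper's: both exhibit $J\in Z(\sigma\cdot g)$ and compute $J^{*}\bigl(\lfloor\widetilde x^{r}\rfloor\xi_{\sigma\cdot g}\bigr)=\zeta_N^{r+1}\lfloor\widetilde x^{r}\rfloor\xi_{\sigma\cdot g}$, which is never the identity action for $0\le r\le N-2$. (Note that primality of $N$ is not actually used in this step --- only $N=n$, which guarantees $J\in\SL_f$ and $\zeta_N^{r+1}\neq 1$ for the given range of $r$.)
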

\begin{proof}
    The element $\sigma \cdot g$ is special with $1$--dimensional fixed locus. We have ${ \A_{f,\sigma \cdot g}' \cong \CC \langle  \lfloor \widetilde x_1^p \rfloor \xi_{\sigma \cdot g} \rangle }$ for $p = 0,\dots,N-2$.
    We also have $J \in G$ and $J \in Z(\sigma \cdot g)$. The action of $J$ gives
    \[
        J^* \left( \lfloor \widetilde x_1^p \rfloor \xi_{\sigma \cdot g} \right) = \zeta_N^{p+1} \cdot \lfloor \widetilde x_1^p \rfloor \xi_{\sigma \cdot g},
    \]
    because $\det(J)=1$. The vector assumed is invariant under the action $J$ if and only if $p+1 \equiv 0 \mod N$, what never holds for the range of $p$ given.
    
\end{proof}

In the obvious way one gets that for a length $N-1$ cycle $\sigma$, an element $\sigma \cdot g$ is conjugate to $\sigma \cdot t_{i_1}^{d_1} t_{i_2}^{d_2}$ in the group $G$ for $d_1+ d_2 \equiv 0$ mod $N$ and $i_1 \in I_{\sigma}^c$, $i_2 \not\in I_\sigma^c$. This can be generalized to the following statement.

\begin{proposition}\label{prop: SL narrow-broad}
    Let $u = \sigma \cdot g \in G$ be an element with the cycle decomposition $\sigma = \prod_{a=1}^m \sigma_a$ with $m \ge 2$. Let $i_a$ be the first index of $I_{\sigma_a}^c$. Then for some $d_a \in \ZZ$, $u$ is conjugate to $\prod_{a=1}^m \sigma_a t_{i_a}^{d_a}$ in the group~$G$.
\end{proposition}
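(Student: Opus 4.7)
The plan is to reduce the statement to a local calculation on each cycle, and then handle the global $\SL_f$ constraint using the hypothesis $m \geq 2$ together with $N$ being prime.

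First I would set up the conjugation formula. Write the generalized cycle decomposition $u = \prod_{a=1}^m \sigma_a g_a$ with $g_a = \prod_{j \in I_{\sigma_a}^c} t_j^{e_{a,j}} \in G_f^d$. Look for a conjugating element $h = \prod_{a=1}^m h_a$ where each $h_a = \prod_{j \in I_{\sigma_a}^c} t_j^{c_{a,j}}$ is diagonal and supported on $I_{\sigma_a}^c$. Using the identity $\sigma t_j^k \sigma^{-1} = t_{\sigma(j)}^k$, a direct computation gives $h u h^{-1} = \sigma \cdot h'$, where $h'$ is the diagonal element whose exponent at position $j$ equals $e_j + c_{\sigma(j)} - c_j$. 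Since elements supported on $I_{\sigma_a}^c$ commute with everything on $I_{\sigma_b}^c$ for $b \neq a$, this formula decouples cycle-by-cycle.

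Next I would solve the local system on each cycle. Fix $a$ and label $I_{\sigma_a}^c = \{j_1,\dots,j_k\}$ with $i_a = j_1$ and $\sigma_a(j_\ell) = j_{\ell+1}$ (indices mod $k$); write $c_\ell := c_{a,j_\ell}$ and $e_\ell := e_{a,j_\ell}$. To kill the exponent at every $j_\ell$ with $\ell \geq 2$ one needs $c_{\ell+1} = c_\ell - e_\ell$ for $\ell = 2,\dots,k$. This determines $c_3,\dots,c_k,c_1$ as affine linear functions of the free parameter $c_2$. Summing the $k$ equations (including the $\ell=1$ equation at position $i_a$) makes the $c_\ell$ telescope, so the resulting exponent at $i_a$ is forced to be $d_a := \sum_{\ell=1}^k e_\ell \bmod N$, regardless of the choice of $c_2$.

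The main obstacle is that the conjugating element $h$ must lie in $\SL_f$, not merely in $G_f^d$. Here I would use $m \geq 2$ and the primality of $N$. Since $m \geq 2$, every cycle has length $k_a = |\sigma_a| \leq N-1$, hence $\gcd(k_a, N) = 1$. Varying the free parameter $c_{a,2}$ shifts $\sum_\ell c_{a,\ell}$ by multiples of $k_a$, and therefore allows $\det(h_a) = \zeta_N^{\sum_\ell c_{a,\ell}}$ to take any value in $\langle \zeta_N\rangle$. Thus the free parameters can be adjusted so that $\prod_a \det(h_a) = 1$, i.e.\ $h \in \SL_f$. Conjugating $u$ by this $h$ produces $\prod_{a=1}^m \sigma_a t_{i_a}^{d_a}$, which is the desired representative; note $\sum_a d_a \equiv 0 \pmod N$ automatically because $\det(g)=1$, consistent with the result lying in $\SL_f$.
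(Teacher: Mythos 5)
Your proof is correct, and its core mechanism is the same as the paper's: conjugate by a diagonal element whose exponents along each cycle are partial sums of the exponents of $g$, so that everything telescopes onto the first index $i_a$ with total $d_a=\sum_j e_{a,j}$. The genuine difference is in how the two arguments force the conjugator into $G=S\ltimes\SL_f$ rather than merely $S\ltimes G_f^d$. The paper conjugates in the bigger group $S\ltimes G_f^d$ and appeals to the claim that this group is generated by its center together with $G$; that claim is delicate when $S$ acts transitively on $\{1,\dots,N\}$ (then the center of $S\ltimes G_f^d$ consists of powers of $J$, which for $n=N$ lie in $\SL_f$, so center and $G$ generate only $G$). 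You instead observe that the defining equations leave one free parameter per cycle, that shifting it changes $\det(h_a)$ by $\zeta_N^{|\sigma_a|\cdot t}$, and that $m\ge 2$ together with the primality of $N$ gives $\gcd(|\sigma_a|,N)=1$, so the determinant of $h$ can be normalized to $1$. This is exactly the point where the hypotheses $m\ge2$ and $N$ prime enter, and your argument makes that visible; in effect you are using diagonal elements of the centralizer of $u$ (constant on cycles of $\sigma$) rather than central elements of the ambient group, which is the robust version of the paper's reduction. So your route is slightly more explicit and self-contained, at the cost of invoking primality, which the paper's (intended) argument avoids when $S$ is not transitive.
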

\begin{proof}
    Consider $\sigma_a = (i_1, i_2, \ldots, i_p)$ and $g_a=t_{i_1}^{d_{i_1}}t_{i_2}^{d_{i_2}}\ldots t_{i_p}^{d_{i_p}}$. For $p=1$ there is nothing to prove. For $p \ge 2$ it is sufficient to consider the bigger group $S \ltimes G^d_f$, since it is generated by its center and $G$. Now we can conjugate by $h=t_{i_2}^{d_{i_2}+d_{i_3}+\ldots+d_{i_p}}t_{i_3}^{d_{i_3}+\ldots+d_{i_p}}\ldots t^{d_{i_p}}_{i_p}$ to get the desired result.
\end{proof}
The following corollary makes a clear distinction between broad and narrow sectors for the group $S \ltimes \SL_f$.
\begin{corollary}\label{corollary: narrow classes in SL}
    Let $X \in \A_{S \ltimes \SL_f}$ be a non--zero element with $X = \lfloor \phi(\bx) \rfloor \xi_u$, then $\lfloor \phi(\bx)\rfloor$ is non--constant only if $u$ is conjugate to $\prod_{a}\sigma_a$. 

    In particular for every such $X$, the conjugacy class of $u$ contains exactly one ${v=wuw^{-1}}$, s.t. $\lfloor \phi(w\cdot\bx) \rfloor \xi_v \in \A_{S}$.
\end{corollary}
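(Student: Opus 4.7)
The plan is to use a carefully chosen centralizer element in $Z(u) \cap G$ to force any non-constant $\phi$ to arise from a $u$ conjugate to a pure permutation. First I would reduce to the case of at least two generalized cycles --- Proposition \ref{long cycle invariants} handles the single $N$-cycle case --- and apply Proposition \ref{prop: SL narrow-broad} to put $u$ into the normal form $u = \prod_{a=1}^m \sigma_a t_{i_a}^{d_a}$ with $d_a \in \{0, \ldots, N-1\}$, in which the $a$-th generalized cycle is special precisely when $d_a = 0$.

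Next I would introduce $\rho_a := \prod_{k \in I^c_{\sigma_a}} t_k \in Z(u) \cap G_f^d$. Since $\det(\rho_a) = \zeta_n^{p_a}$ with $p_a := |\sigma_a|$, we usually have $\rho_a \notin \SL_f$, but the combination $\eta_{a,b} := \rho_a^{p_b}\rho_b^{-p_a}$ always lies in $Z(u) \cap G$. A direct computation via \eqref{eq: group action} shows that $\eta_{a,b}$ scales a basis element $X = \prod_c \lfloor \widetilde x_{i_c}^{r_c} \rfloor \xi_{\sigma_c t_{i_c}^{d_c}}$ by a power of $\zeta_n$ whose exponent, in the mixed case where cycle $a$ is non-special and cycle $b$ is special, simplifies to $-p_a(r_b+1)$. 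Since $N$ is prime and $1 \le p_a < N$, the $\eta_{a,b}$-invariance of $X$ forces $r_b \equiv -1 \pmod N$, which admits no solution in the allowed range $0 \le r_b \le N-2$. Consequently, the presence of any non-special cycle prohibits every special cycle's contribution from being non-constant, so $\phi$ can be non-constant only when all $d_a = 0$ --- i.e., $u$ is $G$-conjugate to the pure permutation $\prod_a \sigma_a$.

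For the uniqueness claim, I would verify that for a diagonal $h \in G_f^d$, the element $h\sigma h^{-1}$ is itself a pure permutation if and only if $h \in Z(\sigma)$, and deduce that two pure permutations are $G$-conjugate iff they are $S$-conjugate. Hence the $G$-conjugacy class of $u$ meets $\C^S$ in exactly one element $v$, and the transformed element $\lfloor \phi(w\cdot\bx) \rfloor \xi_v$ is automatically $Z_S(v)$-invariant because $Z_S(v) \subseteq Z(v) \cap G$ and the original element was $Z(u) \cap G$-invariant. The main technical obstacle is the eigenvalue bookkeeping for the $\rho_a$-action on each tensor factor $\A'_{\sigma_c t_{i_c}^{d_c}}$: one must carefully count the eigenvalues of $u$ equal to $1$ (which differ between the special and non-special cases) to derive the correct invariance exponent, and then leverage the primality of $N$ to extract the impossible constraint $r_b \equiv -1 \pmod N$.
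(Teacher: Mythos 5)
Your proof is correct and follows essentially the same strategy as the paper's: in both cases one takes a special cycle and a non-special cycle, builds from the block-scalars $\prod_{k\in I^c_{\sigma_a}}t_k$ a diagonal element of $\SL_f$ lying in $Z(u)$ (your $\rho_a^{p_b}\rho_b^{-p_a}$ versus the paper's $\rho_1\rho_2^{d}$ with $|\sigma_1|+d|\sigma_2|\equiv 0$), and checks via Eq.~\eqref{eq: group action} that it scales the sector by a nontrivial root of unity, using primality of $N$. Your treatment of the ``in particular'' clause is a welcome addition, as the paper leaves that part implicit.
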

\begin{proof}
Put $u=\sigma g$ with $\sigma\in S, g\in G^d$. If $\lfloor \phi(\bx)\rfloor$ is non constant, then at least one of the cycles of $\sigma$ is special. 
Let it be $\sigma_1$.
Assume further, that there is a nonspecial cycle $\sigma_2$ in the cycle decomposition of $\sigma$.

Let $d$ be such that $|\sigma_1|+d|\sigma_2|\equiv 0\mod N$. Consider the element 
$h= (\prod_{i} t_i) (\prod_j t_j)^d \in\SL_f$ with $i$ running over $I_{\sigma_1}^c$ and $j$ running over $I_{\sigma_2}^c$.
Then $h$ commutes with $u$. We have $h^*\lfloor \tilde{x}_{i_1}^r\rfloor\xi_u=\zeta_N^{r+1}\xi_u$. Since, $h$ fixes other variables and $r$ varies between $0$ and $n-2$, there are no invariant elements in the sector. 
\end{proof}

This completes proof of Proposition~\ref{proposition: structure of the phase spaces}, case (A).

\begin{remark}
The conjugacy classes of $S_N$ are indexed by the partitions of $N$. In particular, any length $N$--cycle of $S_N$ is conjugate to $\sigma = (1,2,\dots, N)$.
We conclude that the representatives of the conjugacy classes of $S_N \ltimes \SL_f$ are given by two partitions of $n=N$ of the same length. The first one being given by the cycle type of $\sigma$ and the second by the set of exponents $d_\bullet$. 
\end{remark}

\subsection{The group $\widetilde G = S\times \langle J\rangle$}
Let $u = \sig \cdot J^q$  and $\sigma = \prod_{a=1}^p \sigma_a$ be the cycle decomposition. 
If $q\equiv 0$, all $\sigma_a$ are special. If $q \not\equiv 0$, for $n=N$ being a prime number, either $p=1$ and $\sigma = \sigma_1$ is special again, or all $\sigma_a$ are non--special.

For the group assumed $J \in Z(u)$. Consider the action of this element on $\A'_{f,u}$.

Assume $X = \lfloor \phi(\bx) \rfloor \xi_{\sigma \cdot J^0}$ and $Y = \lfloor \widetilde x_{i_1} ^{r_1}\rfloor \xi_{\sigma_1 \cdot J^q}$ with some $q \not\equiv 0$. Up to a constant multiple we have for some $0 \le r_{a} \le N-2$
\begin{align}
    X = \prod_{a = 1}^p \lfloor \widetilde x_{i_a} ^{r_a}\rfloor \xi_{\sigma_a} \quad& \text{ giving }\quad J^* \left( X \right) = \zeta_N^{\sum_a (r_a + 1)} \cdot X,
    \\
    Y = \lfloor \widetilde x_{i_1} ^{r_1}\rfloor \xi_{\sigma_1 J^q} \quad& \text{ giving }\quad J^* \left( Y \right) = \zeta_N^{r_1 + 1} \cdot Y
\end{align}
Therefore $X$ is only $J$--invariant if $\sum_a (r_a +1) \in N\ZZ$ and $Y$ is not ever $J$--invariant.

Similarly we have that $\xi_{\sigma\cdot J^q}$ is $J$--invariant for any $q \not\equiv 0$.

This gives the following proposition beneath and completes proof of Proposition~\ref{proposition: structure of the phase spaces}, case (B).

\begin{proposition}\label{prop: J narrow-broad}
    Consider $X := \lfloor \phi(\bx) \rfloor \xi_{\sigma\cdot g} \in \A'_{S\ltimes \langle J \rangle}$. Assume $X$ to be $\langle J \rangle$--invariant. Then either $\lfloor \phi(\bx) \rfloor$ is a constant or $g = \id$.
\end{proposition}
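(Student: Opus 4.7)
The plan is to reorganize the $J$-action computations carried out in the paragraphs immediately preceding the proposition into a clean case analysis on the generalized cycle decomposition of $u$, using primality of $N$ in one crucial spot.

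Write the diagonal part of $u$ as $g = J^q$, so $u = \sigma \cdot J^q$ with generalized cycle decomposition $u = \prod_{a=1}^{p} \sigma_a g_a$. If $q \equiv 0 \pmod{N}$, then $g = \id$ and the conclusion holds immediately, so from now on I assume $q \not\equiv 0 \pmod{N}$ and aim to force $\lfloor \phi(\bx) \rfloor$ to be constant.

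First I would note that $g_a = \prod_{k \in I^{c}_{\sigma_a}} t_k^{q}$, hence $\det(g_a) = \zeta_N^{\,q |\sigma_a|}$. Combined with $q \not\equiv 0 \pmod{N}$ and $N$ prime, this forces the dichotomy: either $|\sigma_a| = N$ (so $p = 1$ and $\sigma_1 g_1$ is a single special $N$-cycle) or $|\sigma_a| < N$ for every $a$, in which case every $\sigma_a g_a$ is non-special. In the non-special sub-case each factor $\A'_{\sigma_a g_a}$ is one-dimensional, spanned by $\lfloor 1 \rfloor \xi_{\sigma_a g_a}$, so every element of $\A'_{f,u} = \bigotimes_{a} \A'_{\sigma_a g_a}$ has constant polynomial part automatically, and the conclusion holds regardless of $J$-invariance.

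In the remaining sub-case, $u = \sigma_1 J^q$ is a single special $N$-cycle, so $X = \lfloor \widetilde{x}_{i_1}^{r_1} \rfloor \xi_{\sigma_1 J^q}$ with $0 \le r_1 \le N-2$. The computation
\[
    J^{*}\bigl( \lfloor \widetilde{x}_{i_1}^{r_1} \rfloor \xi_{\sigma_1 J^q} \bigr) = \zeta_N^{\,r_1 + 1} \cdot \lfloor \widetilde{x}_{i_1}^{r_1} \rfloor \xi_{\sigma_1 J^q}
\]
already recorded in the excerpt then shows that $J$-invariance requires $r_1 + 1 \equiv 0 \pmod{N}$, which contradicts the range $0 \le r_1 \le N-2$. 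Hence no nonzero $J$-invariant element exists in this sub-case, and the implication is vacuously true.

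The main (indeed only) substantive step is the $\det(g_a) = \zeta_N^{q|\sigma_a|}$ dichotomy, which is exactly where primality of $N$ enters; the rest is bookkeeping that packages the two eigenvalue computations stated just before the proposition.
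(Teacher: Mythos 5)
Your proof is correct and follows essentially the same route as the paper: the paper's argument is exactly the discussion preceding the proposition, namely the dichotomy that for $q\not\equiv 0$ and $N$ prime either $p=1$ with $\sigma_1 J^q$ a single special $N$-cycle or all generalized cycles are non-special (forcing constant polynomial part), together with the eigenvalue computation $J^*(Y)=\zeta_N^{r_1+1}Y$ ruling out invariants in the first case. Your only addition is to spell out the justification of that dichotomy via $\det(g_a)=\zeta_N^{q|\sigma_a|}$, which the paper leaves implicit.
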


%

\subsection{Symmetric group action}
For any $u = \sigma\cdot g \in G \subseteq S_N \ltimes G_f^d$ we have $u \in Z(u)$ and for any $X = \lfloor \phi(\bx)\rfloor \xi_u$ we have $\lfloor \phi(u \cdot \bx) \rfloor = \lfloor \phi(\bx) \rfloor$. However the action of $u$ on the generator $\xi_u$ is not necessarily trivial. It was proved in Corollary~39 of \cite{BI}, or can be deduced from Eq.~\eqref{eq: group action}, that we have
\[
    u^* \left( \xi_u\right) = (-1)^{\sgn(\sigma)} \xi_u.
\]
Consider the $\langle u \rangle$--invariant subspace of $\A'_{f,u}$.
\begin{equation}
    \left( \A'_{f,u}\right)^{\langle u \rangle} = 0 \ \text{ if } \ \sigma \text{ is odd and } \ \left( \A'_{f,u}\right)^{\langle u \rangle} = \A'_{f,u} \text{ otherwise}.
\end{equation}

Due to this reason no sector of $u = \sigma \cdot g$ with $\sigma$ odd appears in $\A_{f,G}$. However such elements $u$, if they exist in the group, can still affect the conjugacy classes decomposition and also the centralizers of the other group elements, contributing non--trivially to $\A_{f,G}$. 

In the following proposition we consider group--action approach to the parity condition of Ebeling--Gusein-Zade.
\begin{proposition}\label{prop: PC action}
    Let $u,v \in A_N$ be even commuting permutations. For $T := \langle u,v\rangle$ we have. 
    \begin{itemize}
     \item PC holds for $T$ if and only if $u^*(\xi_v) = \xi_v$ and $v^*(\xi_u) = \xi_u$,
     \item if PC doesn't hold for $T$ we have $u^*(\xi_v) = -\xi_v$ or $v^*(\xi_u) = -\xi_u$.
    \end{itemize}
\end{proposition}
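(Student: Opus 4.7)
The strategy is to apply Eq.~\eqref{eq: group action} directly: $v^{*}(\xi_{u}) = \det(v|_{\Fix(u)})^{-1}\xi_{u}$, and since $v$ acts on $\CC^{N}$ as a permutation matrix this $\pm 1$ scalar is self-inverse. I will compute $\det(v|_{\Fix(u)})$ as a combinatorial invariant of the pair $(u,v)$, do the same for $\det(u|_{\Fix(v)})$, and match the result with the parity of $\dim\Fix(T)$.

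The key observation is that $v|_{\Fix(u)}$ is itself a permutation matrix: because $u$ and $v$ commute, $v$ permutes the cycles of $u$, and on the natural basis $\{e_{C} := \sum_{i\in C}e_{i}\}_{C}$ of $\Fix(u)$ indexed by cycles $C$ of $u$ a direct check gives $v(e_{C}) = e_{v(C)}$. Recalling that a permutation of $m$ elements with $k$ orbits has sign $(-1)^{m-k}$, and noting that $m = \dim\Fix(u)$ while the number of $v$-orbits on cycles of $u$ coincides with the number of $T$-orbits on $\{1,\dots,N\}$, i.e.\ $\dim\Fix(T)$, I obtain
\[
    \det(v|_{\Fix(u)}) = (-1)^{\dim\Fix(u)-\dim\Fix(T)}.
\]
From $u \in A_{N}$ and $\sgn(u) = (-1)^{N - \dim\Fix(u)} = 1$ one has $\dim\Fix(u) \equiv N \mod 2$, so $\det(v|_{\Fix(u)}) = (-1)^{N - \dim\Fix(T)}$. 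This expression is symmetric in $u$ and $v$, so the identical scalar governs $u^{*}(\xi_{v})$.

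Interpreting PC at $T$ as $\dim\Fix(T) \equiv N \mod 2$ (note that every cyclic subgroup $\langle w \rangle \subseteq T$ has $w \in A_{N}$, so its PC condition $\dim\Fix(w) \equiv N \mod 2$ is automatic from the sign formula), the simultaneous equalities $v^{*}(\xi_{u}) = \xi_{u}$ and $u^{*}(\xi_{v}) = \xi_{v}$ are each equivalent to this single parity, which gives the first bullet. The second bullet then follows immediately: failure of PC forces both scalars to $-1$, which a fortiori implies the disjunction. The only step requiring real care is the cycle-bookkeeping identifying $v|_{\Fix(u)}$ with a permutation matrix and matching its orbit count with $\dim\Fix(T)$; everything else is a routine sign calculation.
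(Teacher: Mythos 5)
Your proof is correct and follows essentially the same route as the paper: both reduce $v^*(\xi_u)$ via Eq.~\eqref{eq: group action} to the sign of the permutation that $v$ induces on the cycles of $u$, identify the orbit count of that permutation with $\dim\Fix(T)$, and use the evenness of $u$ and $v$ to convert everything into the parity of $N-\dim\Fix(T)$. Your explicit bookkeeping with the basis $e_C$ of $\Fix(u)$ in fact makes precise what the paper states somewhat ambiguously as $\det(v|_{\Fix(u)})=(-1)^{\dim\Fix(u)-\dim\Fix(v)}$, where the second exponent must be read as the dimension of the fixed locus of $v$ restricted to $\Fix(u)$, i.e.\ $\dim\Fix(T)$.
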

\begin{proof}
 Let $u$ and $v$ share a common eigenbasis $\widetilde x_1, \dots, \widetilde x_N$. Let $u(\widetilde x_k) = \lambda'_k \widetilde x_k$ and ${v(\widetilde x_k) = \lambda''_k \widetilde x_k}$. Assume also $\lambda'_1 = \dots = \lambda_p' = 1$ and $\lambda''_1 = \dots = \lambda_q'' = 1$. We have
\begin{align}
    v^*\left( \xi_u \right) &= \frac{1}{\lambda'_{p+1}\cdots \lambda'_N} \ \xi_u = \frac{1}{\lambda'_{p+1}\cdots \lambda'_N} \ \xi_u \quad q \le p
    \\
    v^*\left( \xi_u \right) &= \frac{1}{\lambda'_{q+1}\cdots \lambda'_N} \ \xi_u = \frac{\lambda'_{p+1}\cdots \lambda'_q}{\lambda'_{p+1}\cdots \lambda'_N} \ \xi_u \quad q > p.
\end{align}
It follows that for both cases
\begin{equation}
    v^*\left( \xi_u \right) = \frac{\lambda_1' \cdots \lambda_q'}{\det(v)} \xi_u = \frac{\det(v_{\mid \Fix(u)})}{\det(v)} \xi_u,
\end{equation}
where $v_{\mid \Fix(u)}$ stands for the restriction of $v$ to the fixed locus of $u$. Assume $q > p$. The action of $v$ on $\Fix(u)$ is given by a permutation of the set $\{1,\dots,q\}$. Therefore both determinants assumed are either $+1$ or $-1$. We have
\[
    \det(v) = (-1)^{N - \dim\Fix(v)}, \quad \det(v_{\mid \Fix(u)}) = (-1)^{\dim\Fix(u) - \dim\Fix(v)}.
\]
This completes the proof.
\end{proof}

\section{Mirror isomorphism}\label{section: general case proof}

We have seen in Example 1 of Section~\ref{section: mirror map examples} that the condition $N=n$ is necessary for mirror symmetry to hold. Assume it to hold true. Assume further $N$ to be prime.
Fix some $S \subseteq S_N$. 

Let $G \subset S \ltimes G_f$. We call $u$--th sector of $\A_{f,G}$ \textit{stable} if 
\begin{equation}
    \sigma^*(\xi_u) = \xi_u \quad \text{ for any } \quad \sigma \in Z(u) \cap S \subset G.
\end{equation}
Denote by $\A_{f,G}^{stable} \subseteq \A_{f,G}$ the direct sum of all stable sectors $\A_{f,u}$. It's important to note that our stability property considers the generator of the $u$--th sector rather than an arbitrary element $\lfloor \phi(\bx)\rfloor \xi_u$ of it.

In particular according to Proposition~\ref{prop: PC action}, we have $\A_{f,G}^{stable} = \A_{f,G}$ if $S$ satisfies \eqref{eq: PC} of Ebeling--Gusein-Zade. The converse is not true. In particular these vector spaces coincide in Examples 2 and 3 of Section~\ref{section: mirror map examples}. In the case of Example 4 we have $\A_{f,G}^{stable} = \A_{G,d} \oplus \A_{G,s}^{(1)} \subsetneq \A_{f,G}$.

\begin{theorem}\label{theorem: general}
    The map $\tau$ establishes an isomorphism $\A_{f,S \ltimes \SL_f}^{stable} \to \A_{f,S \ltimes \J}^{stable}$. 
    Under this isomorphism we have $q_l(X) = q_l(\tau(X))$ and $q_r(X) = N-2-q_r(\tau(X))$ for any homogeneous $X \in \A_{f,S \ltimes \SL_f}^{stable}$.
\end{theorem}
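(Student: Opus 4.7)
The plan is to analyze $\tau$ on the explicit basis provided by Proposition~\ref{proposition: structure of the phase spaces}: on the $\SL_f$-side the basis vectors are either narrow $\xi_u$ with $u = \prod_a \sigma_a t_{i_a}^{d_a}$ (and $\sum_a d_a \equiv 0 \bmod N$ forced by $u \in \SL_f$) or broad $\prod_a \lfloor \widetilde x_{i_a}^{r_a}\rfloor \xi_\sigma$. By the definition of $\tau$ in Section~\ref{section: mm}, $\tau(\xi_u) = \prod_a \lfloor \widetilde x_{i_a}^{d_a - 1}\rfloor \xi_\sigma$, which is a broad $\J$-vector in the $\sigma$-sector, while $\tau\bigl(\prod_a \lfloor \widetilde x_{i_a}^{r_a}\rfloor \xi_\sigma\bigr) = \prod_a \xi_{\sigma_a\cdot h_a}$ with $h_a = (\prod_{j\in I^c_{\sigma_a}}t_j)^{k_a}$ and $k_a|\sigma_a| \equiv r_a + 1 \bmod N$. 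In order for this second image to survive the projection $\pi_2$ and land in a narrow $\J$-sector $\xi_{\sigma J^q}$, the integers $k_a$ must all agree to a common $q$, so that $\prod_a h_a = J^q$.

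The central computation is showing that $\SL_f$-invariance of the source broad sector forces this uniformity. Using Eq.~\eqref{eq: group action}, any $h = \prod_a(\prod_{j\in I^c_{\sigma_a}}t_j)^{e_a}\in Z(\sigma)\cap\SL_f$ (characterized by $\sum_a e_a|\sigma_a|\equiv 0 \bmod N$) acts on the broad vector by the scalar $\prod_a\zeta_N^{e_a(r_a+1)}$, since $h$ scales $\widetilde x_{i_a}$ by $\zeta_N^{e_a}$ and $\det(h|_{\Fix(\sigma)}) = \prod_a \zeta_N^{e_a}$. Invariance thus requires the linear form $\sum_a e_a(r_a+1)$ to vanish on the hyperplane $\sum_a e_a|\sigma_a|\equiv 0$; because $N$ is prime, this is equivalent to $r_a + 1 \equiv \lambda|\sigma_a|\bmod N$ for a common $\lambda\in\ZZ/N$. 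Hence $k_a = \lambda$ is uniform, the image is $\xi_{\sigma J^\lambda}$, and the range $r_a\in\{0,\ldots,N-2\}$ corresponds to $\lambda\in\{1,\ldots,N-1\}$. The obstruction $\sigma$ is a single length-$N$ cycle (where this parametrization degenerates to $r = N-1$) is excluded on both sides by Proposition~\ref{long cycle invariants} and its direct $\J$-analogue: $J\in Z(\sigma\cdot g)$ acts on $\lfloor \widetilde x^r \rfloor \xi_{\sigma\cdot g}$ by $\zeta_N^{r+1}$, which is invariant only when $r = N-1$, outside the admissible range. The reverse narrow-to-broad direction is symmetric: $J$-invariance of $\prod_a\lfloor\widetilde x_{i_a}^{d_a - 1}\rfloor\xi_\sigma$ on the $\J$-side translates, via the same action formula, to $\sum_a d_a\equiv 0\bmod N$, which is exactly the $\SL_f$-condition on $u$.

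Stability is handled by examining the $S$-action on $\xi_\sigma$. For $\sigma_0\in Z_S(\sigma)$ inducing a permutation $\pi_{\sigma_0}$ of the cycles of $\sigma$, Eq.~\eqref{eq: group action} gives $\sigma_0^*(\xi_\sigma) = \sgn(\pi_{\sigma_0})\xi_\sigma$ on both sides, so the stability hypothesis is exactly $\sgn(\pi_{\sigma_0}) = 1$ for every such $\sigma_0$. Under stability, full $Z_S(\sigma)$-invariance of a broad vector is equivalent to $\pi_{\sigma_0}$-symmetry of the tuple $(r_a)$. Since $|\sigma_a|$ is constant on $\pi_{\sigma_0}$-orbits, the relation $r_a+1 \equiv \lambda|\sigma_a|\bmod N$ yields $\pi_{\sigma_0}$-symmetric exponents automatically; the same reasoning shows the target $\xi_{\sigma J^\lambda}$ is $Z_S(\sigma)$-invariant (further aided by $\Fix(\sigma J^\lambda) = 0$). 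Narrow sectors on both sides are always stable because $\Fix(u)=0$ for $u$ composed exclusively of non-special cycles, so $\tau$ preserves the stable subspaces; the resulting parameter matching $\lambda \leftrightarrow q$ and $(d_a)\leftrightarrow (r_a)$ on each $\sigma$-fiber yields the claimed bijection.

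The bigrading identity reduces to a direct age computation: a length-$\ell_a$ cycle $\sigma_a$ contributes $(\ell_a-1)/2$ to $\age(\sigma)$, a twisted cycle $\sigma_a\cdot t_{i_a}^{d_a}$ contributes $(\ell_a-1)/2 + d_a/N$ to $\age(u)$, and a cycle $\sigma_a\cdot(\prod_{j\in I^c_{\sigma_a}}t_j)^\lambda$ contributes $(\ell_a-1)/2 + \lambda|\sigma_a|/N$ to $\age(\sigma J^\lambda)$ up to an integer shift. Summing over $a$, using $\sum_a d_a\in N\ZZ$ together with $d_u = N$ for narrow sectors and $d_u = N-p$ for broad sectors, and substituting into the definitions of $q_l, q_r$ yields $q_l(X) = q_l(\tau X)$ and $q_r(X) + q_r(\tau X) = N-2$ in both directions. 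The main obstacle is the combined bookkeeping of stability with the integer shifts that appear when reducing $k/\ell + \lambda/N$ back into $[0,1)$ in the age formula; once these are handled, the theorem reduces to the parameter bijection between $\SL_f$-broad and $\J$-narrow sectors described above.
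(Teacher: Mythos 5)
Your proof follows essentially the same route as the paper's: the key step forcing $r_a+1\equiv\lambda|\sigma_a|\bmod N$ for a common $\lambda$ via invariance under the diagonal part of the centralizer and primality of $N$ is exactly Proposition~\ref{prop: equal exponents in SL}, the dual condition $\sum_a d_a\equiv 0$ is the content of Section~\ref{section: narrow iso}, stability disposes of the $S$-part in the same way, and your age computation matches Section~\ref{section: mm bidegree}. (One small slip that does not affect the argument: by Eq.~\eqref{eq: group action} the sign by which $\sigma_0$ acts on $\xi_\sigma$ is $\sgn(\pi_{\sigma_0})/\sgn(\sigma_0)$ rather than $\sgn(\pi_{\sigma_0})$.)
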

\begin{proof}
 See Sections~\ref{section: narrow iso} and \ref{section: broad iso} for the first claim and Section~\ref{section: mm bidegree} for the second claim.
\end{proof}

To show that mirror map sets up an isomorphism we need to consider the action of the centralizers of the elements $u = \sigma g \in S \ltimes \SL_f$ and $v = \sigma J^k \in S \ltimes \J$ with the same $\sigma \in S$.

\subsection{Narrow sectors}\label{section: narrow iso}
Let $u \in S \ltimes \SL_f$ be s.t. $u$--th sector is stable and narrow. Assume $u$ to be decomposed into generalized cycles as $u = \prod_{a=1}^p \sigma_a t_{i_a}^{d_a}$. Let $Y := \xi_u \in \A'_{f,S \ltimes \SL_f}$. Then $X := \tau(Y) = \prod_{a} \lfloor \widetilde x_{i_a}^{d_a-1} \rfloor \xi_{\sigma_a} \in \A'_{f,S \ltimes \J}$. 

As $u$-th sector is narrow, any $v \in Z(u) \subset S \ltimes \SL_f$ gives $v^* (Y) = (\det(v))^{-1} Y = Y$ and we know that $Y$ is non--zero in $\A_{f,S \ltimes \SL_f}$. We show that $X$ is non--zero in $\A_{f,S \ltimes \J}$ too.

The centralizer $Z(\prod_a \sigma_a) \subset S \ltimes \J$ is generated by the element $J$ and all permutations $\sigma' \in S$, commuting with $\prod_a \sigma_a$. We have that $J^*(X) = X$ if and only if $\sum_a d_a \equiv 0$ modulo $N$, what is equivalent to the condition $\prod_a t_{i_a}^{d_a} \in \SL_f$.
The action of $\sigma'$ is trivial on $X$ because the $u$--th sector is stable.

\subsection{Broad sectors}\label{section: broad iso}
Let $u \in S \ltimes \SL_f$ be s.t. $u$--th sector is stable and broad. Assume $u$ to be decomposed into generalized cycles as $u = \prod_{a=1}^p \sigma_a$. Take $X := \prod_{a} \lfloor \widetilde x_{i_a}^{d_a-1} \rfloor \xi_{\sigma_a} \in \A'_{f,S \ltimes \SL_f}$.

\begin{proposition}\label{prop: equal exponents in SL}
    Let $X = \prod_{a=1}^p \lfloor \widetilde x_{i_a}^{r_a} \rfloor \xi_{\sigma_a}$ be non--zero in $\A_{f,S \ltimes \SL_f}$. Let $k_1,\dots,k_p$ be as in Section~\ref{section: mm}. Then $k_a = k_b$ for all $1 \le a,b \le p$.
\end{proposition}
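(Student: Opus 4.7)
The plan is to exploit the fact that if $X$ is non-zero in $\A_{f,S\ltimes\SL_f}$ then $X$ must be invariant under the whole centralizer $Z(u)\subset S\ltimes\SL_f$, and in particular under the diagonal subgroup $H:=Z(u)\cap\SL_f$. The structure of $H$ is transparent: since $u=\prod_a\sigma_a$ has no diagonal part, a diagonal element $h=\prod_j t_j^{e_j}$ commutes with $u$ iff the function $j\mapsto e_j$ is constant on each orbit of $\sigma$. Writing $T_a:=\prod_{j\in\mathrm{supp}(\sigma_a)}t_j$ and denoting by $c_a$ the common value of $e_\bullet$ on cycle $\sigma_a$, every such $h$ has the form $h=\prod_a T_a^{c_a}$, and the $\SL_f$-condition becomes $\sum_a c_a|\sigma_a|\equiv 0\pmod{N}$.

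Next I would compute $h^*(X)$ explicitly. On the fixed vector $\widetilde x_{i_a}$ the element $h$ acts by $\zeta_N^{c_a}$, so $\phi(h\cdot\bx)=\zeta_N^{\sum_a c_a r_a}\phi(\bx)$. Using formula \eqref{eq: group action} in the form of the product over non-unit eigenvalues of $u$ (the form actually employed in the proof of Proposition~\ref{long cycle invariants}), the prefactor equals $\zeta_N^{-\sum_a c_a(|\sigma_a|-1)}$, hence
\[
    h^*(X)\;=\;\zeta_N^{\sum_a c_a(r_a+1)\,-\,\sum_a c_a|\sigma_a|}\cdot X.
\]
On $H$ the second sum vanishes modulo $N$, so the invariance of $X$ forces $\sum_a c_a(r_a+1)\equiv 0\pmod{N}$ for every tuple $(c_a)\in(\ZZ/N)^p$ with $\sum_a c_a|\sigma_a|\equiv 0$.

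Then I would extract the pairwise equality of the $k_a$ by an explicit choice: for any two distinct indices $a$ and $b$, put $c_a:=|\sigma_b|$, $c_b:=-|\sigma_a|$ and $c_d:=0$ otherwise, which lies in $H$. The invariance relation gives $(r_a+1)|\sigma_b|\equiv(r_b+1)|\sigma_a|\pmod{N}$. Proposition~\ref{long cycle invariants} rules out any length-$N$ cycle from contributing to a non-zero broad sector, so every $|\sigma_a|$ is coprime to the prime $N$ and invertible in $\ZZ/N$; therefore
\[
    (r_a+1)\,|\sigma_a|^{-1}\;\equiv\;(r_b+1)\,|\sigma_b|^{-1}\pmod{N}
\]
is a common value $k\in\ZZ/N$. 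Since $0\le r_a\le N-2$, we have $r_a+1\not\equiv 0$, so $k\in\{1,\dots,N-1\}$, and by the uniqueness in Section~\ref{section: mm} this $k$ is exactly $k_a$. Hence $k_a=k_b$ for all $a,b$.

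The main obstacle is careful bookkeeping in the scalar for $h^*(\xi_u)$: using the determinant factor with the wrong sign, or confusing $|\sigma_a|$ with $|\sigma_a|-1$, would produce a constraint of the shape $r_a-1\equiv\mu|\sigma_a|$ rather than $r_a+1\equiv k|\sigma_a|$, which would not match the definition of $k_a$ in Section~\ref{section: mm}. Cross-checking against the $J$-action computed in the proof of Proposition~\ref{long cycle invariants} (where the scalar is $\zeta_N^{p+1}$) pins down the correct form unambiguously. Apart from this, the argument reduces to elementary linear algebra over the field $\ZZ/N$, using nothing beyond $H$-invariance.
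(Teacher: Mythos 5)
Your proof is correct and follows essentially the same route as the paper's: the paper also tests invariance of $X$ against diagonal elements $(\prod_{i\in I_{\sigma_a}^c}t_i)^{\alpha}(\prod_{j\in I_{\sigma_b}^c}t_j)^{\beta}$ with $\alpha|\sigma_a|+\beta|\sigma_b|\equiv 0 \pmod N$, obtains the same eigenvalue $\zeta_N^{\alpha(r_a+1)+\beta(r_b+1)}$, and concludes $k_a=k_b$ from primality of $N$. Your only additions — writing out the full subgroup $Z(u)\cap\SL_f$ and verifying the prefactor via the determinant formula — are consistent with, and slightly more explicit than, the paper's argument.
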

\begin{proof}
    Take $a=1$ and $b=2$. Let $g_1 := ( \prod_{a \in I_{\sigma_1}^c} t_a )^\alpha$ and $g_2 := ( \prod_{a \in I_{\sigma_2}^c} t_a)^\beta$. Then for $\alpha |\sigma_1| + \beta |\sigma_2| \equiv 0$ modulo $N$ we have $v := g_1g_2 \in Z(\prod_a \sigma_a) \subset S \ltimes \SL_f$. 
    
    We have $v^* (X) = \lambda_1^{r_1+1}\lambda_2^{r_2 +1} \cdot X$ with $\lambda_1 = \zeta_N^{\alpha}$, $\lambda_2 = \zeta_N^{\beta}$. We conclude that $X$ is invariant w.r.t. $v$ if and only if $(r_1+1) \alpha + (r_2+1)\beta \equiv 0$. For $N$ being prime this holds for $\alpha$ and $\beta$ assumed if and only if $k_1 \equiv k_2$. 
\end{proof}

It follows by the proposition above that $Y := \tau(X) = \xi_{\sigma_a J^k} \in \A'_{f,S \ltimes \J}$ is well--defined. We have $J^*(Y) = \det(J)^{-1} Y = Y$ and it remains to consider the action of symmetric part elements on both $X$ and $Y$. Every $S$--element $\sigma' \in Z(\prod_a \sigma_a J^k) \subset S \ltimes \J$ gives $(\sigma')^* Y = \det(\sigma')^{-1} Y = Y$. At the same time we have $\sigma' \in Z(u)$ and $(\sigma')^* X = X$ because the sector is stable.

\subsection{Bidegree}\label{section: mm bidegree}

Let $X = \xi_{u} \in \A_{tot}$ with narrow $u = \prod_{a=1}^p \sigma_a g_a$. Set $Y := \tau(X) \in \A_{tot}$. Let $d_1,\dots,d_p$ be the exponents associated to $g_1,\dots,g_p$ as in Section~\ref{section: mm}. We have
\begin{align}
    q_l(Y) &= q_r(Y) = \sum_{a=1}^p \frac{d_a-1}{N} + \frac{N-p}{2} - \frac{N-p}{N} = \sum_{a=1}^p \frac{d_a}{N} - 1 + \frac{N-p}{2},
    \\
    q_l(X) &= \frac{N-p}{2} + \sum_{a=1}^p\frac{d_a}{N} -1, \quad q_r(X) = \frac{N+p}{2} - \sum_{a=1}^p \frac{d_a}{N} - 1.
\end{align}
This gives the desired statement for the map $\tau$, sending narrow sector $X \in \A'_{f,S\ltimes\SL_f}$ to $Y = \tau(X) \in \A'_{f,S\ltimes\J}$.

Because $\tau$ is involutive, this also approves the case when $\tau$ maps a broad sector element of $\A'_{f,S\ltimes\SL_f}$ to a narrow element of $\A'_{f,S\ltimes\J}$. However we find it useful to provide the full prove in this case too.

Let $X = \prod_{a=1}^p \lfloor x_{i_a}^{r_a} \rfloor \xi_{\sigma_a}$ and $Y := \tau(X) = \prod_{a=1}^p \xi_{\sigma_a J^k}$.
According to the defintion of the mirror map we have for some $l_a \ge 0$ that $r_a+1 = k \cdot |\sigma_a| - l_a N$. Moreover we can assume $l_a < |\sigma_a|$ due to our assumptions on $k$ and $r_a$.

We have 
\[
    q_l(X) = q_r(X) = \frac{\sum_{a=1}^p r_a}{N} + \frac{N-p}{2} - \frac{N-p}{N} = \frac{\sum_{a=1}^p (r_a + 1)}{N} -1 + \frac{N-p}{2}.
\]
The bigrading of $Y$ has more complicated expression. The eigenvalues of $\prod_{a=1}^p\sigma_a J^k$ are
\[
    \lambda_{a,q} := \exp (2\pi \sqrt{-1} \left( \frac{q}{|\sigma_a|} + \frac{k}{N} \right) )
\]
for $a=1,\dots,p$ and $q=1,\dots,|\sigma_a|$. To compute $\age(\prod_a \sigma_a J^k)$ note that that the expression in the brackets is not always smaller than $1$. We have
\[
    \lambda_{a,q} := \exp ( \frac{2\pi \sqrt{-1}}{|\sigma_a|} \left( q + l_a + \frac{r_a+1}{N} \right) ).
\]
Because $r_a+1 \le N-1$, we conclude that there are exacly $l_a$ values of $q$, s.t. the expression inside brackets is greater of equal to $|\sigma_a|$. This gives
\begin{align}
    \age(\prod_a \sigma_aJ^k) &= \sum_{a=1}^p \left( \sum_{q=1}^{|\sigma_a|} \frac{q}{|\sigma_a|} + |\sigma_a| \frac{k}{N} - l_a \right)
    = \sum_{a=1}^p \left( \frac{|\sigma_a|-1}{2} + \frac{k |\sigma_a|}{N} - l_a \right).
\end{align}

Summing up we have $q_l(Y) = \frac{N-p}{2} + k - \sum_{a=1}^p l_a - 1 = q_l(X)$. It's easy to see that $N-2 = q_l(Y) + q_r(Y)$, what finishes the proof.


%
\section{Miror map for Fermat quintic}
The purpose of this section is to prove the following theorem.
\begin{theorem}\label{theorem: quintic}
    For $N=5$ there is a vector space isomorphism $\A_{f,S \ltimes \SL_f} \to \A_{f,S \ltimes \J}$.
\end{theorem}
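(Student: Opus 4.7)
The plan is to extend the mirror map $\tau$ of Theorem~\ref{theorem: general}, which already identifies the stable subspaces, to a full vector-space isomorphism by matching the non-stable complements. I decompose both phase spaces as $\A_{f,G} = \A_{f,G}^{stable} \oplus \A_{f,G}^{ns}$, with $\A_{f,G}^{ns}$ a chosen complement; since $\tau$ handles the stable parts, it remains to produce an isomorphism $\A_{f, S\ltimes\SL_f}^{ns} \cong \A_{f, S\ltimes\J}^{ns}$. I claim this can be done by the identity inclusion inside $\A_{tot}$, giving an explicit extended map $\widetilde\tau$ equal to $\tau$ on $\A^{stable}$ and to the identity on $\A^{ns}$.

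I would first reduce non-stable sectors to broad sectors of the form $u = \sigma\cdot\id$. If the $u$-th sector is narrow, then $\xi_u$ is the sole generator of $\A'_{f,u}$, so any sign action of some $\sigma' \in Z(u) \cap S$ on $\xi_u$ kills the whole invariant subspace. Hence every non-stable contributing sector is broad; by Corollary~\ref{corollary: narrow classes in SL} and the $\J$-side analogue recorded after Proposition~\ref{prop: J narrow-broad}, broad sectors in both $S\ltimes\SL_f$ and $S\ltimes\J$ have $u = \sigma\cdot\id$, so the non-stable complements on the two sides are naturally indexed by the same set of representatives $\sigma \in \C^S$.

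For such $\sigma$ with cycle lengths $n_1,\dots,n_p$ and $\Fix(\sigma)$ spanned by the cycle-sum coordinates $y_1,\dots,y_p$, an invariant polynomial $\phi = \prod_a y_a^{r_a}$ in the non-stable broad sector must satisfy the sign-twisted $(Z(\sigma)\cap S)$-invariance (identical on both sides) together with a diagonal-group condition: the $\SL$-condition $r_a + 1 \equiv \mu\, n_a \pmod 5$ for a common $\mu$, or the $\J$-condition $\sum_a(r_a+1) \equiv 0 \pmod 5$. Summing the $\SL$-condition and using $\sum_a n_a = 5$ gives $\SL \Rightarrow \J$, so every $\SL$-invariant is already a $\J$-invariant. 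The main technical obstacle is the converse modulo the sign-twisted invariance, which I plan to establish by case analysis over the seven cycle types of $S_5$: $(5)$, $(4,1)$, $(3,2)$, $(3,1,1)$, $(2,2,1)$, $(2,1,1,1)$, $(1^5)$. In each case, non-stability forces some $\sigma' \in Z(\sigma) \cap S$ to act by a nontrivial sign on $\xi_\sigma$; such $\sigma'$ is either trivial on $\Fix(\sigma)$, whereupon the sign-twisted invariant space is zero and matching is automatic, or it must permute cycles of equal length with odd parity, forcing the corresponding exponents $r_a$ to coincide. Under this identification the invariants depend only on common exponents $r^{(\ell)}$ attached to each distinct cycle length $\ell$, and the relation $\sum_\ell m_\ell \ell = 5$ with $N=5$ prime then makes the $\J$-condition equivalent to the $\SL$-condition. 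With this coincidence in hand, the extended map $\widetilde\tau$ defined above is manifestly a well-defined vector-space isomorphism.
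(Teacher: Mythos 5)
Your overall strategy is close in spirit to what the paper does: both proofs keep the Krawitz-type exchange where it works and fall back on the identity map on the broad classes where the broad--narrow exchange fails. However, there is a genuine gap in the way you organize this. The decomposition $\A_{f,G}=\A_{f,G}^{stable}\oplus\A_{f,G}^{ns}$ does not split compatibly on the two sides, and your key claim --- that on non-stable broad sectors the $\J$-invariance condition together with the sign-twisted $(Z(\sigma)\cap S)$-invariance is equivalent to the $\SL_f$-invariance condition --- is false. Concretely, take $S\supseteq\langle (1,2,3),(4,5)\rangle$ (Example~4 of Section~\ref{section: mirror map examples}, or the $S\cong S_3\times S_2$ example of Section~\ref{section: examples2}). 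The broad $(1,2,3)$-sector of $\A_{f,S\ltimes\J}$ contains the classes $\lfloor (x_1+x_2+x_3)^a(x_4^bx_5^c+x_4^cx_5^b)\rfloor\xi_{(1,2,3)}$ with $b\neq c$: these satisfy the $\J$-condition $\sum_a(r_a+1)\equiv 0$ and the required $(Z(\sigma)\cap S)$-equivariance, but they are not $\SL_f$-invariant (e.g.\ $t_4t_5^4$ acts nontrivially on them). Your step ``such $\sigma'$ must permute cycles of equal length, forcing the corresponding exponents to coincide'' is where this goes wrong: equivariance under a cycle-permuting $\sigma'$ forces (anti)symmetrization over the permuted cycles, not equality of exponents, and the (anti)symmetrized monomials with distinct exponents survive. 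Consequently the identity map from the $\SL$-side broad invariants (only the tied-exponent classes $\psi_{a,b}$) onto the $\J$-side broad invariants is injective but far from surjective --- in Example~4 the dimensions are $4$ versus $8$.

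The missing classes are recovered not by the identity but by $\tau$ applied to the \emph{narrow} $\SL$-sectors $\xi_{(1,2,3)t_1^at_4^bt_5^c}$ with $b\neq c$, which land inside the very same broad $(1,2,3)$-sector of $\A_{f,S\ltimes\J}$. So a single broad sector of $\A_{f,S\ltimes\J}$ is split between the image of $\tau$ on stable narrow sectors and the identity-matched remainder; no sector-wise stable/non-stable decomposition can separate these two roles, which is why your two pieces cannot be matched independently. The paper handles this by replacing $\tau$ with the two-component map $\widehat\tau$ ($\widehat\tau_1+\widehat\tau_2$ on narrow classes, $-\widehat\tau_3+\widehat\tau_4$ on broad ones, where $\widehat\tau_3$ is the identity) together with the correction operator $K$, and the substantive input for $N=5$ (the final Proposition of Section~6) is that for a nonzero class at most one of its two candidate targets vanishes --- the only problematic cycle type being $(i,j)(k,l)$, where the vanishing of $\widehat\tau_1$ forces $u=(1,2)(3,4)t_1^dt_3^dt_5^{-2d}$ and hence $\widehat\tau_2\neq 0$. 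Your sketch would become correct if you replaced ``identity on a complement of the stable part'' by this finer, class-by-class dichotomy inside each broad sector.
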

\begin{corollary}
    Denote $h^{p,q}_\SL:= h^{p,q}(f,S\ltimes\SL)$ and $h^{p,q}_\J := h^{p,q}(f,S\ltimes\J)$. 
    We have
    \[
        h^{1,1}_\SL + h^{2,1}_\SL = h^{1,1}_\J + h^{2,1}_\J.
    \]
\end{corollary}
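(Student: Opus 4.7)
The plan is to extend the mirror map $\tau$ of Theorem~\ref{theorem: general} past the stable subspaces where it is already known to produce an isomorphism. Decompose each phase space as $\A_{f,S\ltimes\SL_f}=\A_{f,S\ltimes\SL_f}^{stable}\oplus\A_{f,S\ltimes\SL_f}^{rest}$ and $\A_{f,S\ltimes\J}=\A_{f,S\ltimes\J}^{stable}\oplus\A_{f,S\ltimes\J}^{rest}$. By Theorem~\ref{theorem: general} the map $\tau$ already identifies the stable summands, so only an isomorphism between the non-stable complements remains to be built. Example~4 is the prototype: $\A_{\SL,s}^{(2)}$ and $\A_{\J,s}^{(2)}$ literally coincide as subspaces of $\A_{tot}$, and the identity map is the isomorphism on that non-stable piece.

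For the non-stable part in general, Corollary~\ref{corollary: narrow classes in SL} and Proposition~\ref{prop: J narrow-broad} force the sector index to be a pure permutation $u=\prod_a\sigma_a$ on both sides, so the same representative $u$ labels a sector in both phase spaces. One compares the diagonal invariance conditions on $\phi\in\Jac(f^u)$: on the $\SL_f$-side $Z(u)\cap\SL_f$ gives several congruences (diagonal exponents constant on each $\sigma_a$-orbit and with zero sum mod $5$), whereas on the $\J$-side the diagonal invariance reduces to the single $J$-charge congruence on the total degree of $\phi$. Using primality of $N=5$ together with the anti-invariance of $\xi_u$ under the $S$-generator that causes non-stability (Proposition~\ref{prop: PC action}), the two systems are shown to carve out the same subspace of $\A'_{f,u}$ inside $\A_{tot}$; the identity map $X\mapsto X$ is then the required isomorphism $\A_{f,S\ltimes\SL_f}^{rest}\to\A_{f,S\ltimes\J}^{rest}$.

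For the corollary, on the stable summand Theorem~\ref{theorem: general} yields $h^{p,q}_\J=h^{p,3-q}_\SL$ (restricted to the stable part), while on the non-stable summand the identity map yields $h^{p,q}_\J=h^{p,q}_\SL$ (restricted to the non-stable part). Both phase spaces satisfy the Calabi-Yau Serre-type duality $h^{p,q}=h^{3-p,3-q}$ since $J$ lies in both groups and the odd-$\sigma$ sectors contribute nothing (discussion preceding Proposition~\ref{prop: PC action}). Combining, on the stable part $h^{1,1}_\J+h^{2,1}_\J=h^{1,2}_\SL+h^{2,2}_\SL=h^{2,1}_\SL+h^{1,1}_\SL$ by Serre duality; adding the bidegree-preserved non-stable contributions delivers $h^{1,1}_\SL+h^{2,1}_\SL=h^{1,1}_\J+h^{2,1}_\J$.

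The main obstacle is the uniform verification of the non-stable matching across all $S\subseteq S_5$, since the cycle types of non-stable $u$ and the intersection $Z(u)\cap S$ vary with $S$. In each case one must check that the several $\SL_f$-congruences on the multidegree of $\phi$ reduce, after imposing the $S$-anti-invariance forced by the non-stability generator, to the single $J$-charge congruence. Primality of $N=5$ is used crucially to collapse the lattice of $\SL_f$-commuting diagonal elements modulo $J$-powers, and the bookkeeping of the sign factors from equation~\eqref{eq: group action} combined with the special-cycle bounds $0\le r_a\le N-2$ is the delicate technical step.
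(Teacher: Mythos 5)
Your conclusion is true, but the argument you build it on has a step that fails. The decomposition you rely on --- $\tau$ identifying the stable summands with $q_r\mapsto 3-q_r$, and the \emph{identity} map identifying the non-stable complements with bidegree preserved --- is modelled on Example~4, but it is not what happens in general, and the Klein 4-group example of Section~\ref{section: examples2} breaks it. There the non-stable sectors of $\A_{f,S\ltimes\J}$ are the broad sectors indexed by $\sigma_k$, carrying the twelve skew-symmetric classes $\lfloor\phi^{(k)}_{a,b,c}\rfloor\xi_{\sigma_k}$, while the same sectors on the $\SL_f$-side contribute nothing (the diagonal part of the centralizer forces equal exponents on the two $2$-cycles, which is incompatible with skew-symmetry); the classes that actually match the $\lfloor\phi^{(k)}_{a,b,c}\rfloor\xi_{\sigma_k}$ are the \emph{narrow} classes $\xi_{\sigma_k t_1^at_3^bt_5^{5-a-b}}$ with $a\neq b$, via $\widehat\tau_1$. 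So the two ``non-stable complements'' are not equal as subspaces of $\A_{tot}$, the identity map between them is not defined, and the bidegree bookkeeping does not split along your stable/rest decomposition. This is precisely the phenomenon that forces the introduction of $\widehat\tau=\widehat\tau_1+\widehat\tau_2$, the correction $K$ and $\widetilde\tau$ in the proof of Theorem~\ref{theorem: quintic}, rather than a patching of $\tau$ by the identity; your second paragraph (``the two systems are shown to carve out the same subspace'') is exactly the assertion that fails.

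The repair is to drop the bidegree tracking through the isomorphism altogether, which is what the paper does: one uses only (i) the equality of the total dimensions of $\A_{f,S\ltimes\SL_f}$ and $\A_{f,S\ltimes\J}$ supplied by Theorem~\ref{theorem: quintic}, (ii) the bidegree-respecting pairing on each phase space (Theorem~32 of \cite{BI}), which gives $h^{p,q}=h^{3-p,3-q}$ on each side separately, and (iii) the fact that the four corner subspaces of bidegrees $(0,0)$, $(3,3)$, $(3,0)$, $(0,3)$ are $1$-dimensional on both sides. Each total dimension then equals $4+2\bigl(h^{1,1}+h^{2,1}\bigr)$, and the claim follows. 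You already invoke the duality (ii); what you must not do is assume that it, or the mirror isomorphism, restricts compatibly to a stable/non-stable splitting, because no such compatibility holds.
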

\begin{proof}
    In both spaces the subspaces of $(0,0)$, $(3,3)$, $(3,0)$ and $(0,3)$ classes are all $1$--dimensional.
    Both $\A_{f,S\ltimes\SL}$ and $\A_{f,S\ltimes\J}$ have a pairing, respecting the bidegree (cf. Theorem~32 of \cite{BI}).
    The statement follows now from the equality of the dimensions of $\A_{f,S\ltimes\SL}$ and $\A_{f,S\ltimes\J}$.
\end{proof}

The proof of the theorem above occupies the rest of this section. 

To show the theorem we first need to redefine the mirror map.

\subsection{Definition}
As in general case set $\widehat\tau\left( X \right) := \prod_{a=1}^p \widehat\tau\left(\lfloor \widetilde x_{i_a}^{r_a} \rfloor \xi_{\sigma_a g_a}\right)$ for $X = \prod_{a=1}^p  \lfloor \widetilde x_{i_a}^{r_a} \rfloor \xi_{\sigma_a g_a}$. 
We (re)define $\widehat\tau$ on the generalized cycles. 

\subsubsection{Case 1:  $u = \sigma_1 \cdot g_1$ is a non--special cycle}
Assume also $g = \prod_{p} t_{p}^{d_{1,p}}$ with $p$ running over $I^c_{g}$. Denote $d_1 := \sum_p d_{1,p} \mod N$ with $1 \le d_1 \le N-1$.
\begin{align}
    \widehat\tau(\xi_{\sigma \cdot g}) &:= \widehat\tau_1(\xi_{\sigma \cdot g}) + \widehat\tau_2(\xi_{\sigma \cdot g}),
    \\
    \text{for} \quad & \widehat\tau_1(\xi_{\sigma \cdot g}) := \lfloor \widetilde x_{1}^{d_1-1} \rfloor \cdot \xi_{\sigma}, \quad \widehat\tau_2(\xi_{\sigma \cdot g}) := \xi_{\sigma \cdot h},
\end{align}
for $h = (\prod_a t_a )^{k_1}$ and $k_1 \in 1,\dots,N-1$, the unique integer, s.t. $d_1 \equiv k_1 |\sigma|$ modulo~$N$. 

\subsubsection{Case 2:  $u = \sigma \cdot g$ is a special cycle}
We have necessarily $g = \id$ or $\sigma = \id$. For any $0 \le r_1 \le N-2$ set
\begin{align}
    \widehat\tau(\lfloor \widetilde x_{i_1} ^{r_1} \rfloor \xi_{\sigma \cdot g}) &:= - \widehat\tau_3(\lfloor \widetilde x_{i_1} ^{r_1} \rfloor \xi_{\sigma \cdot g}) + \widehat\tau_4(\lfloor \widetilde  x_{i_1} ^{r_1} \rfloor \xi_{\sigma \cdot g}),
    \\
    \text{for} \quad & \widehat\tau_3(\lfloor \widetilde  x_{i_1} ^{r_1} \rfloor \xi_{\sigma \cdot g}) := \lfloor \widetilde  x_{i_1} ^{r_1} \rfloor \xi_{\sigma \cdot g}, \quad \widehat\tau_4(\lfloor \widetilde x_{i_1} ^{r_1} \rfloor \xi_{\sigma \cdot g}) := \xi_{\sigma \cdot h},
\end{align}
with $h := (\prod_a t_a )^{k_1}$ and $k_1 \in 1,\dots,N-1$, the unique integer, s.t. $r_1+1 \equiv k_1 |\sigma|$ modulo~$N$. 

\subsection{The properties of $\widehat\tau$}
The map $\widehat\tau$ mixes up the special and non--special generators $\xi_{\sigma_ag_a}$. However we have in $\A_{f,S\ltimes \SL_f}$ and $\A_{f,S\ltimes\J}$ only broad or narrow elements. It's easy to check that the map $\widehat\tau$ still generalizes the mirror map of Krawitz.

\begin{proposition}
    For $G$ being $\SL_f$ or $\J$, let $X,Y \in \A'_{S\ltimes G}$ be narrow and broad basis elements respectively. We have in $\A'_{f, S \ltimes G}$
    \begin{equation}
        \widehat\tau(X) = \widehat\tau_1(X) + \widehat\tau_2(X), \quad \widehat\tau(Y) = -\widehat\tau_3(Y) + \widehat\tau_4(Y).
    \end{equation}
\end{proposition}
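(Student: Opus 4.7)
My approach is to expand $\widehat\tau$ on the product and argue that only two of the $2^p$ expansion terms survive projection to $\A'_{f, S \ltimes G}$. Writing narrow $X = \prod_a \xi_{\sigma_a g_a}$ (each factor non--special), the definition yields
\[
\widehat\tau(X) = \prod_{a=1}^p \bigl(\widehat\tau_1 + \widehat\tau_2\bigr)(\xi_{\sigma_a g_a}) = \sum_{T \subseteq \{1,\ldots,p\}} M_T
\]
with $M_T = \bigl(\prod_{a \in T} \lfloor \widetilde x_{i_a}^{d_a-1}\rfloor\bigr)\,\xi_{\sigma \cdot h_T}$ and $h_T := \prod_{a \notin T} h_a$. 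I identify $\widehat\tau_1(X)$ and $\widehat\tau_2(X)$ with the pure extreme terms $M_{\{1,\ldots,p\}}$ and $M_\emptyset$; the task is to show $M_T = 0$ in $\A'_{f, S \ltimes G}$ for every intermediate $T$. The broad case is handled identically, replacing $d_a - 1$ by $r_a$, $\widehat\tau_1, \widehat\tau_2$ by $\widehat\tau_3, \widehat\tau_4$, and tracking the signs $(-1)^{|T|}$ coming from the $-\widehat\tau_3$ factors.

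For $G = \J$, I would argue by sector containment. Each $h_a = (\prod_{j \in O_a} t_j)^{k_a}$ is supported on the orbit $O_a$ of the cycle $\sigma_a$. The sector index $\sigma \cdot h_T$ belongs to $S \ltimes \J$ iff $h_T \in \J$; since any element of $\J$ is either trivial or acts on all coordinates with the same nonzero exponent, this forces either $T = \{1,\ldots,p\}$ (giving $h_T = \mathrm{id}$) or $T = \emptyset$ (together with the constraint, already imposed by $X \in \A'_{f, S \ltimes \J}$, that all $k_a$ are equal). Every mixed $M_T$ thus sits in a sector outside $S \ltimes \J$ and is annihilated by the projection $\pi\colon \A_{tot} \to \A'_{f, S \ltimes \J}$.

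For $G = \SL_f$ the sector condition $\sum_{a \notin T} d_a \equiv 0 \pmod N$ alone permits additional mixed terms, so I invoke invariance under the full diagonal centralizer. For any $v := \prod_a \bigl(\prod_{j \in O_a} t_j\bigr)^{\alpha_a}$ with $\sum_a \alpha_a |\sigma_a| \equiv 0 \pmod N$ one has $v \in Z(\sigma h_T) \cap \SL_f$; using that $\Fix(\sigma h_T) = \bigoplus_{a \in T} \CC\, \widetilde x_{i_a}$, a direct computation combining the rescaling factor with the action on the polynomial part gives
\[
v^\ast(M_T) = \zeta_N^{\sum_{a \in T} \alpha_a d_a}\, M_T.
\]
The requirement that $v^\ast(M_T) = M_T$ for every admissible $(\alpha_a)$ is, by duality, the statement that the form $\alpha \mapsto \sum_{a \in T} \alpha_a d_a$ is a scalar multiple of $\alpha \mapsto \sum_a \alpha_a |\sigma_a|$. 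Evaluating at any $a \notin T$ forces the scalar to be $0$ (using that $|\sigma_a|$ is invertible modulo $N$, which holds since no full $N$--cycle contributes to $\A'_{f, S \ltimes \SL_f}$ by Proposition~\ref{long cycle invariants}), and then evaluating at $a \in T$ forces $d_a = 0$, contradicting the non--specialness of each cycle unless $T \in \{\emptyset, \{1,\ldots,p\}\}$. Running the same argument with $r_a$ in place of $d_a - 1$ settles the broad case.

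The main obstacle is the $\SL_f$ setup: one must carefully combine the rescaling factor $(\det v|_{\Fix(\sigma h_T)^c})^{-1}$ with the rescaling of the polynomial factor $\prod_{a \in T} \widetilde x_{i_a}^{d_a - 1}$ to arrive at the clean expression above, and then recognize the resulting constraint as a linear-duality condition on the codimension--one subspace $\{\alpha : \sum_a \alpha_a|\sigma_a| \equiv 0\}$. Once these steps are in place, the conclusion for both narrow $X$ and broad $Y$ follows uniformly.
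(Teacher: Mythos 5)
Your overall strategy is sound and is, in substance, the argument the paper has in mind: expand the product $\prod_a \widehat\tau(\cdot)$ into $2^p$ terms and kill every ``mixed'' term $M_T$ with $\emptyset \subsetneq T \subsetneq \{1,\dots,p\}$. The paper disposes of these terms in one line by citing Proposition~\ref{proposition: structure of the phase spaces} (equivalently, Corollary~\ref{corollary: narrow classes in SL} and Proposition~\ref{prop: J narrow-broad}, which say that sectors mixing special and non--special cycles contribute nothing); your sector--containment argument for the $\J$ target and your centralizer/linear--duality argument for the $\SL_f$ target are a correct unpacking of exactly those two facts, and the computation $v^\ast(M_T)=\zeta_N^{\sum_{a\in T}\alpha_a d_a}M_T$ checks out (the determinant prefactor contributes $\zeta_N^{\sum_{a\in T}\alpha_a}$ and the polynomial part $\zeta_N^{\sum_{a\in T}\alpha_a(d_a-1)}$).

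There is, however, one step you announce but do not carry out, and it is needed for the statement as written: the sign in the broad case. Expanding $\prod_a\bigl(-\widehat\tau_3+\widehat\tau_4\bigr)$, the two surviving terms are $M_\emptyset=\widehat\tau_4(Y)$ with coefficient $+1$ and $M_{\{1,\dots,p\}}=\widehat\tau_3(Y)=Y$ with coefficient $(-1)^{p}$, whereas the proposition asserts the coefficient $-1$. So you must show that $p$ is odd for every non--zero broad basis element. This is true but not automatic: a broad sector $u=\prod_{a=1}^p\sigma_a$ survives the $\langle u\rangle$--action only when $\sigma$ is even, i.e.\ $(-1)^{N-p}=1$, and since $N$ is odd this forces $p$ odd. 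Without this observation your ``tracking the signs $(-1)^{|T|}$'' leaves the broad--case identity unproved when $p$ is even (a case that must be explicitly excluded rather than silently assumed). With that one line added, the proof is complete; the remaining imprecision --- that in the broad case the contradiction at $a\in T$ is $r_a+1\equiv 0 \bmod N$ being impossible for $0\le r_a\le N-2$, not ``non--specialness'' --- is cosmetic.
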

\begin{proof}
    This follows immediately from Proposition~\ref{proposition: structure of the phase spaces}.
\end{proof}

The following proposition considers the elements for which both $\widehat\tau_1 \neq 0$, $\widehat\tau_2\neq 0$ or both $\widehat\tau_3 \neq 0$,$\widehat\tau_4\neq 0$.
\begin{proposition}\label{prop: pair of taus}
    Let $Y := \xi_v$ be a non--zero narrow element of $\A_{f,S \ltimes \SL_f}$. We have
    \begin{description}
     \item[(A)] both $\widehat\tau_1(Y) \neq 0$ and $\widehat\tau_2(Y) \neq 0$ in $\A_{f,S \ltimes \langle J \rangle}$ if and only if $\exists X$ narrow non-zero in $\A_{f,S \ltimes \SL_f}$, s.t. 
     $\widehat\tau_1(Y) = \widehat\tau_3(X)$ and $\widehat\tau_2(Y) = \widehat\tau_4(X)$.
     \item[(B)] define the following $\CC$--linear map $K: \A_{f,S \ltimes \SL_f} \to \A_{f,S \ltimes \SL_f}$. 
     Let it act by
     \[
      K(X) := Y \text{ and } K(Y) := - X
     \]
     on $X$ and $Y$ as in (A) above and act by identity otherwise.
    Then the map 
    \[
        \widetilde \tau := \frac{1}{2}\left( \widehat\tau \circ K + \widehat\tau \right)
    \]
     maps the basis elements of $\A_{f,S\ltimes \SL_f}$ to the basis elements of $\A_{f,S\ltimes \langle J \rangle}$.
    \end{description}
\end{proposition}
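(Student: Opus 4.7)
The proof has two parts. My plan is to prove Part~(A) by unpacking the two summands of $\widehat\tau(Y)$ along the generalized cycle decomposition of $v$ and explicitly producing the paired broad element $X$; then to deduce Part~(B) from Part~(A) by direct linear algebra on $\widetilde\tau=\tfrac12(\widehat\tau\circ K+\widehat\tau)$.

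For Part~(A), write $Y=\xi_v$ with $v=\prod_{a=1}^p\sigma_a t_{i_a}^{d_a}$, $d_a\in\{1,\ldots,N-1\}$, $\sum_a d_a\equiv 0\pmod N$, and unpack
\[
\widehat\tau_1(Y)=\prod_a\lfloor\widetilde x_{i_a}^{d_a-1}\rfloor\xi_{\sigma_a},\qquad \widehat\tau_2(Y)=\prod_a\xi_{\sigma_a(\prod_{b\in I^c_{\sigma_a}}t_b)^{k_a}},
\]
with $d_a\equiv k_a|\sigma_a|\pmod N$. The second projects non-trivially to $\A_{f,S\ltimes\langle J\rangle}$ iff the underlying group element lies in $S\ltimes\langle J\rangle$, which forces all $k_a$ to coincide at a common value $k$; but this is exactly the condition of Proposition~\ref{prop: equal exponents in SL} that makes $X:=\prod_a\lfloor\widetilde x_{i_a}^{d_a-1}\rfloor\xi_{\sigma_a}$ a non-zero broad basis element of $\A_{f,S\ltimes\SL_f}$. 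Moreover, the symmetric-part invariance of $\widehat\tau_1(Y)$ in $\A_{f,S\ltimes\langle J\rangle}$ coincides with that of $X$ in $\A_{f,S\ltimes\SL_f}$, as both are controlled by the same centralizer $Z(\sigma)\cap S$ acting on identical polynomial data. With $X$ in hand, the equalities $\widehat\tau_3(X)=X=\widehat\tau_1(Y)$ and $\widehat\tau_4(X)=\widehat\tau_2(Y)$ follow by matching exponents $r_a+1=d_a\equiv k_a|\sigma_a|$.

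For Part~(B), I would substitute directly. On a paired $Y$, using linearity and the identities from (A),
\[
\widetilde\tau(Y)=\tfrac12\bigl(\widehat\tau(-X)+\widehat\tau(Y)\bigr)=\tfrac12\bigl((\widehat\tau_1(Y)-\widehat\tau_2(Y))+(\widehat\tau_1(Y)+\widehat\tau_2(Y))\bigr)=\widehat\tau_1(Y),
\]
a single broad basis element of $\A_{f,S\ltimes\langle J\rangle}$, and symmetrically $\widetilde\tau(X)=\widehat\tau_2(Y)$, a single narrow basis element. On unpaired basis elements $Z$ one has $K(Z)=Z$, so $\widetilde\tau(Z)=\widehat\tau(Z)$, and by the contrapositive of (A) at most one of the two summands of $\widehat\tau(Z)$ survives.

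The main obstacle lies in the unpaired case, where I must rule out that $\widehat\tau(Z)$ vanishes entirely, i.e., that both summands are killed simultaneously. This requires tracking how the strictly larger centralizer $Z(\sigma)\cap S$ on the image side (compared with $Z(v)\cap S$ on the source side) interacts with the $\langle J\rangle$-action and with the equal-$k_a$ constraint; for $N=5$ this is feasible by explicit case analysis of the possible cycle types of $\sigma\in S_5$, using Proposition~\ref{prop: PC action} to control the signs appearing in the symmetric-group action on the relevant sectors.
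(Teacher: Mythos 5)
Your proposal follows the paper's own proof essentially step for step: in (A) you set $X:=\widehat\tau_1(Y)$, use $\widehat\tau_2(Y)\neq 0$ to force the exponents $k_a$ to a common value $\kappa$ (which handles the diagonal part of $Z(\sigma)\cap \SL_f$, since $\sum_a l_a d_a\equiv\kappa\sum_a l_a|\sigma_a|\equiv 0$ for $h=\prod_a J_a^{l_a}\in\SL_f$) and observe that the symmetric parts of the two centralizers act identically on the same element of $\A_{tot}$; your computation in (B) giving $\widetilde\tau(Y)=\widehat\tau_1(Y)$ and $\widetilde\tau(X)=\widehat\tau_2(Y)=\widehat\tau_4(X)$ is exactly the paper's. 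Two caveats. First, you invoke Proposition~\ref{prop: equal exponents in SL} as if equality of the $k_a$ were \emph{sufficient} for $X\neq 0$ in $\A_{f,S\ltimes\SL_f}$; that proposition only gives necessity, and what is actually needed is the diagonal-part computation plus the symmetric-part comparison, which you do supply immediately afterwards, so this is a misattribution rather than a gap. Second, the ``main obstacle'' you flag at the end --- ruling out that $\widehat\tau(Z)$ vanishes entirely on an unpaired basis element --- is not part of this proposition: part (B) is used in the paper only to say that $\widetilde\tau$ sends each basis element to at most a single basis element rather than to a genuine two-term combination, and the non-vanishing for $N=5$ is precisely the content of the \emph{following} proposition, where the $(i,j)(k,l)$ case analysis you anticipate is carried out. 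If you intend (B) to include non-vanishing you cannot leave that analysis as a promissory note; as the paper reads the statement, your proof is complete without it.
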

\begin{proof}~\\
{\bf (A).} Assume $\widehat\tau_1(Y) \neq 0$, $\widehat\tau_2(Y) \neq 0$ and $v = \sigma \cdot g$. Let $\sigma = \prod_{a=1}^p \sigma_a$ be the cycle decomposition. 

Set $X := \widehat\tau_1(Y)$. By the construction $X \in \A'_{f,u}$ with $u=\sigma$. For $d_a$ as in Section~\ref{section: mm} we have $X = \prod_{a=1}^p \lfloor x_{i_a}^{d_a-1} \rfloor \xi_u$. Let $k_a |\sigma_a| = d_a \mod N$. Because $\widehat\tau_2(Y) \neq 0$, we have that $k_1=\dots=k_p := \kappa$.

To show that $X$ is non--zero in $\A_{f,S \ltimes \SL_f}$, consider the centralizer $Z(u)$ in $S\ltimes \langle J\rangle$. It's generated by the permutations $\sigma' \in S$, commuting with $\sigma$ and also the diagonal element $J$. 
In $S \ltimes \SL_f$ the centralizer $Z(v)$ is generated by the same set of $\sigma' \in S$ as above and also the diagonal elements $h \in \SL_f$ commuting with $\sigma$. Denote $J_a := \prod_i t_i$ with $i \in I^c_{\sigma_a}$. Then for some $l_1,\dots,l_p$ we have $h = \prod_{a=1}^p J_a^{l_a}$. By Eq.~\eqref{eq: group action} we have
\[
    h^*\left( X \right) = \prod_{a=1}^p \zeta_N^{l_a d_a} \cdot X.
\]
Using the $\kappa$ above, modulo $N$ we have $\sum_{a=1}^p l_ad_a \equiv \kappa \left( \sum_{a=1}^p l_a |\sigma_a| \right) \equiv 0$ because $h \in \SL_f$.

The action of $\sigma'$ considered is the same on $X$ both considered in $\A'_{f,S\ltimes \SL_f}$ and $\A'_{f,S\ltimes \langle J \rangle}$. It is trivial because non--special $\widehat\tau_2(Y)$ also has $\sigma'$ in it's centralizer and is non--zero.

For $\widehat\tau_3(X) \neq 0$, $\widehat\tau_4(X) \neq 0$. Set $Y := \widehat\tau_4(X)$. This case is treated completely similarly.
\\
{\bf (B).} The statement is obvious for $X$ and $Y$, s.t. $\widehat\tau_1(X)$, $\widehat\tau_2(X)$ and $\widehat\tau_3(Y)$, $\widehat\tau_4(Y)$ are not simultaneously non--zero. If this doesn't hold, by case (A) above we have $\widetilde\tau(Y) = \widehat\tau_1(Y)$ and $\widetilde\tau(X) = \widehat\tau_4(X)$.
\end{proof}

\begin{proposition}
    The map $\widetilde \tau$ is an isomorphism for $N=5$.
\end{proposition}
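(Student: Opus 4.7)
The plan is to invoke Proposition~\ref{prop: pair of taus}(B), which already shows that $\widetilde\tau$ sends every basis element of $\A_{f,S\ltimes\SL_f}$ to a single basis element of $\A_{f,S\ltimes\J}$. Thus the theorem reduces to proving that the induced map on the distinguished bases of Proposition~\ref{proposition: structure of the phase spaces} is a bijection. Since $N=5$ is prime, Proposition~\ref{long cycle invariants} kills the length-$5$-cycle sectors, so I would stratify the source basis by the conjugacy class of the symmetric part $\sigma \in \C^S\subseteq S_5$, restricting to cycle types in $\{1^5,\,1^3\cdot 2,\,1\cdot 2^2,\,1^2\cdot 3,\,2\cdot 3,\,1\cdot 4\}$, and verify the bijection on each $\sigma$-block independently.

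Inside a fixed $\sigma$-block I would split the basis of $\A_{f,S\ltimes\SL_f}$ into three families following Proposition~\ref{prop: pair of taus}(A): narrow vectors $Y = \prod_a \xi_{\sigma_a t_{i_a}^{d_a}}$, non-paired broad vectors $X$, and paired broad vectors $X$. The calculation inside the proof of Proposition~\ref{prop: pair of taus}(B) gives $\widetilde\tau(Y) = \widehat\tau_1(Y) = \prod_a \lfloor \widetilde x_{i_a}^{d_a-1}\rfloor \xi_\sigma$ (broad in the $\J$-sector $\sigma$), $\widetilde\tau(X)=-X$ for non-paired $X$ (broad in the same $\J$-sector), and $\widetilde\tau(X) = \widehat\tau_4(X) = \xi_{\sigma J^k}$ for paired $X$ (narrow in the $\J$-sector $\sigma J^k$). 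I would then match these images against the basis of $\A_{f,S\ltimes\J}$ supplied by Proposition~\ref{proposition: structure of the phase spaces}(B): the narrow vectors $\xi_{\sigma J^k}$ are covered by the paired broad family, while the broad vectors $\prod_a\lfloor\widetilde x_{i_a}^{s_a}\rfloor\xi_\sigma$ subject to $\sum_a(s_a+1)\equiv 0\bmod N$ are hit by the narrow family via the Krawitz-type shift $s_a=d_a-1$, supplemented by the non-paired broad family on exactly those $(s_a)$ whose corresponding $\xi_{\sigma J^k}$ fails the $Z(\sigma J^k)\cap S$-invariance.

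The crucial step is this last matching: one must show that the dichotomy \emph{paired vs.\ non-paired} on the $\SL$-side is exactly dual to the dichotomy \emph{$\xi_{\sigma J^k}$ survives $S$-invariance on the $\J$-side vs.\ it does not}, so that every broad $\J$-basis vector is hit once and every narrow one too. This reduces via Proposition~\ref{prop: PC action} to comparing the parity of the restriction of each $\sigma' \in Z(\sigma J^k)\cap S$ to $\Fix(\sigma J^k)$ with its full determinant in $\CC^5$, and primality of $N=5$ ensures the congruences $k\cdot|\sigma_a|\equiv r_a+1\bmod 5$ have unique solutions so that the common $k$ witnessing pairing is unambiguously defined. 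With only the six cycle types above and only finitely many subgroups $S\subseteq S_5$ to inspect, the verification becomes a finite case-check, for which the patterns in Examples 2, 3, 4 of Section~\ref{section: mirror map examples} together with those in Section~\ref{section: examples2} exhibit the essential bookkeeping. The main obstacle I expect is ensuring that no cycle type or subgroup forces a mismatch in this accounting; the cycle type $2\cdot 3$ needs the most care, because it is the only case in which $\sigma$ has no fixed points, so that the narrow $\J$-targets $\xi_{\sigma J^k}$ actually exist and must be matched bijectively against a nonempty paired broad family on the $\SL$-side.
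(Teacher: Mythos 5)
Your opening move---reducing to a basis-to-basis matching via Proposition~\ref{prop: pair of taus}(B)---is the same as the paper's, but your taxonomy of images is incomplete in a way that breaks the subsequent bookkeeping, and the ``finite case-check'' you defer to is exactly where all the content of the proof lives. Concretely, you assert that every narrow $Y=\prod_a\xi_{\sigma_a t_{i_a}^{d_a}}$ on the $\SL$-side has $\widetilde\tau(Y)=\widehat\tau_1(Y)$, a broad element of the $\sigma$-sector, and that the narrow targets $\xi_{\sigma J^k}$ are covered by the paired broad family. Both claims fail: $\widehat\tau_1(Y)$ can vanish in $\A_{f,S\ltimes\J}$ (some $\sigma'\in Z(Y)\cap S$ acts by $-1$ on it), in which case $\widetilde\tau(Y)=\widehat\tau_2(Y)$ is \emph{narrow}. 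This is not a marginal case: for the Klein four-group $S=\langle(1,2)(3,4),(1,3)(2,4)\rangle$ of Section~\ref{section: examples2}, the sectors $\xi_{\sigma_k}$ carry no $S$-invariant broad classes on the $\SL$-side at all, so there is no paired broad family, and the nonzero narrow $\J$-vectors $\xi_{\sigma_k J^a}$ are reached only through $\widehat\tau_2$ applied to narrow $\SL$-vectors with equal exponents --- precisely the case your matching scheme omits. As stated, your accounting would leave those basis vectors uncovered.

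What is actually needed, and what the paper proves, is the non-vanishing statement: for nonzero narrow $Y$ and broad $X$ in $\A_{f,S\ltimes\SL_f}$, the pairs $(\widehat\tau_1(Y),\widehat\tau_2(Y))$ and $(\widehat\tau_3(X),\widehat\tau_4(X))$ are never simultaneously zero; combined with Proposition~\ref{prop: pair of taus}(B) this yields the isomorphism. The paper obtains this not by enumerating the cycle types and subgroups of $S_5$ but by a targeted analysis of the failure mode: if $\widehat\tau_1(Y)=0$, the sign $-1$ contributed by some $\sigma'$ forces $\sigma$ to be a product of two transpositions with one fixed point and, up to relabelling, $g=t_1^{d}t_3^{d}t_5^{-2d}$; in that configuration $\widehat\tau_2(Y)=\xi_{\sigma J^{-2d}}$ is explicitly nonzero. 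This is where $N=5$ enters (and where the argument breaks for $N=7$, per the paper's remark). Your proposal names a plausible dichotomy to verify but neither proves it nor correctly identifies which families must cover which targets, so the argument does not go through as written.
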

\begin{proof}
    Let's follow the steps of Section~\ref{section: general case proof}. Let $X,Y$ be non--zero broad and narrow elements of $\A_{f,S\ltimes \SL_f}$ respectively. By using Proposition~\ref{prop: pair of taus} (B) it's enough to show that for non--zero $X,Y$ we have $\widehat\tau_1(Y)$, $\widehat\tau_2(Y)$ are not simultaneously zero and $\widehat\tau_3(X)$, $\widehat\tau_4(X)$ are not simultaneously zero.
    Moreover it's enough to consider the action of the symmetric part elements $\sigma' \in S$ on both sides.
    
    Assume $\widehat\tau_1(Y) = 0$ in $\A_{f,S\ltimes\J}$. Then there is $\sigma' \in Z(Y)$, s.t. $\sigma' \in Z(\widehat\tau_1(Y))$ and $(\sigma')^* (\widehat\tau_1(Y)) = - \widehat\tau_1(Y)$. In this case we should have $\sigma = (i,j)(k,l)$ for some pairwise distinct $1 \le i,j,k,l \le 5$. Without loss of generality assume $\{i,j,k,l\} = \{1,2,3,4\}$. Then we should have $u = \sigma g = (1,2)(3,4)t_1^{d}t_3^dt_5^{-2d}$. Then we have $\widehat\tau_2(Y) = \xi_{\sigma J^{-2d}}$ that is non--zero in $\A_{f,S\ltimes\J}$.
    The case of $\widehat\tau_4(X) = 0$ in $\A_{f,S\ltimes\J}$ is treated completely similarly. 
\end{proof}
\begin{remark}
    It's natural to consider the map $\widetilde\tau$ in a more general context. However it fails to establish an isomorphism already for $N=n=7$.
\end{remark}

\subsection{Examples}\label{section: examples2}
We follow the notation of Section~\ref{section: mirror map examples}. In particular, we make use of the spaces $\A_{\SL,s}$ and $\A_{\J,s}$.
\subsubsection{Klein 4--group}
Consider $S = \langle \sigma_1,\sigma_2 \rangle \subset S_5$ with $\sigma_1 := (1,2)(3,4)$, $\sigma_2 := (1,3)(2,4)$. Define also $\sigma_3:= \sigma_1\sigma_2$. The spaces $\A_{\SL,s}$ and $\A_{\J,s}$ are $24$--dimensional.
Let
\begin{align}
    \phi_{a,b,c}^{(1)} := \left((x_1+x_2)^a(x_3+x_4)^b-(x_1+x_2)^b(x_3+x_4)^a\right)x_5^c,
    \\
    \phi_{a,b,c}^{(2)} := \left((x_1+x_3)^a(x_2+x_4)^b-(x_1+x_3)^b(x_2+x_4)^a\right)x_5^c,
    \\
    \phi_{a,b,c}^{(3)} := \left((x_1+x_4)^a(x_2+x_3)^b-(x_1+x_4)^b(x_2+x_3)^a\right)x_5^c.
\end{align}
We have
\begin{align}
    & \A_{\SL,s} = \bigoplus_{\substack{a,b=1,\dots,4 \\ a \le b}} \CC \langle \xi_{\sigma_1 t_1^at_3^bt_5^{5-a-b}} \rangle 
    \bigoplus_{\substack{a,b=1,\dots,4 \\ a \le b}} \CC \langle \xi_{\sigma_2 t_1^at_2^bt_5^{5-a-b}} \rangle
    \bigoplus_{\substack{a,b=1,\dots,4 \\ a \le b}} \CC \langle \xi_{\sigma_3 t_1^at_4^bt_5^{5-a-b}} \rangle.
    \\
    & \A_{\J,s} = \bigoplus_{k=1}^3 \bigoplus_{a=1}^4 \CC \langle \xi_{\sigma_k J^a} \rangle \bigoplus_{k=1}^3 \CC\langle \phi_{1,0,1}^{(k)}\xi_{\sigma_k}, \phi_{2,0,0}^{(k)}\xi_{\sigma_k}, \phi_{3,1,3}^{(k)}\xi_{\sigma_k}, \phi_{3,2,2}^{(k)}\xi_{\sigma_k} \rangle.
\end{align}
The point that the polynomials $\phi_{a,b,c}^{(k)}$ are skew-symmetric w.r.t. action of $\sigma_l$ with $l\neq k$ reflects the fact that $\sigma_l^*(\xi_{\sigma_k}) = -\xi_{\sigma_k}$.

Because $\A_{\SL,s}$ only contains broad sectors, we should only consider the maps $\widetilde \tau_1$ and $\widetilde \tau_2$.
One computes
\begin{align}
    \widehat\tau_1(\xi_{\sigma_1 t_1^at_3^bt_5^{c}}) = 
    \begin{cases}
        \phi_{a-1,b-1,c-1}^{(1)} & \text{ if } a \neq b,
        \\
        0 & \text{ if } a = b.
    \end{cases}
    \quad 
    \widehat\tau_2(\xi_{\sigma_1 t_1^at_3^bt_5^{c}}) = 
    \begin{cases}
        0 & \text{ if } a \neq b,
        \\
        \xi_{\sigma_1 J^{c+1}} & \text{ if } a = b.
    \end{cases}
\end{align}
The last equality looks to be misterious because it only depends on $c$. However, the mistery is resolved by the fact that we have coincidence of the indices $a,b$ in this case and $a,b,c$ are all connected by the degree condition.

In this example the map $K$ appeared to be identity and $\widetilde \tau = \widehat \tau$.

\subsubsection{$S \cong S_3 \times S_2$}
Consider $S = \langle (1,2,3),(1,2),(4,5) \rangle \subset S_5$. The spaces $\A_{\SL,s}$ and $\A_{\J,s}$ are $8$--dimensional.
Let
\begin{align}
    \phi_{a,b,c} := (x_1+x_2+x_3)^a(x_4^bx_5^c + x_4^cx_5^b).
\end{align}
We have
\begin{align}
    & \A_{\SL,s} = \CC\langle \xi_{(1,2,3)t_1 t_4t_5^3}, \xi_{(1,2,3)t_1^2t_4t_5^2}, \xi_{(1,2,3)t_1^3t_4^3t_5^4}, \xi_{(1,2,3)t_1^4t_4^2t_5^4} \rangle
    \\
    & \quad \bigoplus \CC\langle \lfloor \phi_{0,1,1}\rfloor \xi_{(1,2,3)}, \lfloor \phi_{2,0,0}\rfloor \xi_{(1,2,3)}, \lfloor \phi_{3,2,2}\rfloor \xi_{(1,2,3)}, \lfloor \phi_{1,3,3}\rfloor \xi_{(1,2,3)} \rangle.
    \\
    & \A_{\J,s} = \bigoplus \CC\langle \lfloor \phi_{0,0,2}\rfloor \xi_{(1,2,3)}, \lfloor \phi_{1,0,1}\rfloor \xi_{(1,2,3)}, \lfloor \phi_{2,2,3}\rfloor \xi_{(1,2,3)}, \lfloor \phi_{3,1,3}\rfloor \xi_{(1,2,3)} \rangle
    \\
    & \quad \bigoplus \CC\langle \lfloor \phi_{0,1,1}\rfloor \xi_{(1,2,3)}, \lfloor \phi_{2,0,0}\rfloor \xi_{(1,2,3)}, \lfloor \phi_{3,2,2}\rfloor \xi_{(1,2,3)}, \lfloor \phi_{1,3,3}\rfloor \xi_{(1,2,3)} \rangle.
\end{align}
The mirror map gives
\begin{align}
    &\widehat\tau_3(\lfloor \phi_{a,b,c}\rfloor \xi_{(1,2,3)}) = \lfloor \phi_{a,b,c}\rfloor \xi_{(1,2,3)}, \quad \widehat\tau_4(\lfloor \phi_{a,b,c}\rfloor \xi_{(1,2,3)}) = 0, \quad b\neq c,
    \\
    &\widehat\tau_1(\xi_{(1,2,3)t_1^a t_4^bt_5^c}) = \lfloor \phi_{a-1,b-1,c-1}\rfloor \xi_{(1,2,3)}, \quad \widehat\tau_2(\xi_{(1,2,3)t_1^a t_4^bt_5^c}) = 0.
\end{align}

In this example the map $K$ appeared to be identity and $\widetilde \tau = \widehat \tau$.

\subsubsection{$S \cong S_3$ revisited}
To illustrate how the map $K$ works, consider $S := \langle (1,2,3), (1,2) \rangle$ as in Example~3 of Section~\ref{section: mirror map examples}.
We have
\begin{align}
    & \widehat\tau (\lfloor H^k\rfloor \xi_{\id})  = \xi_{J^{k-1}},
    \\
    & \widehat\tau (\xi_{(1,2,3) t_1^at_4^bt_5^c}) = \lfloor\phi_{a-1,b-1,c-1}\rfloor \xi_{(1,2,3)} ,\quad \text{ if } b\neq c,
    \\
    & \widehat\tau (\xi_{(1,2,3) t_1^at_4^bt_5^b}) = \lfloor\phi_{a-1,b-1,b-1}\rfloor \xi_{(1,2,3)} + \xi_{(1,2,3) J^b},
    \\
    & \widehat\tau(\lfloor \phi_{a,b,b}\rfloor \xi_{(1,2,3)}) = -\lfloor \phi_{a,b,b}\rfloor \xi_{(1,2,3)} +  \xi_{(1,2,3)J^{b-1}}.
\end{align}
Then $K(\xi_{(1,2,3) t_1^at_4^bt_5^b}) = -\lfloor\phi_{a-1,b-1,b-1}\rfloor \xi_{(1,2,3)}$, $K(\lfloor \phi_{a,b,b}\rfloor \xi_{(1,2,3)}) = \xi_{(1,2,3) t_1^at_4^bt_5^b}$ and $K = \id$ on all the other basis vectors.

\end{document}